\documentclass[11pt,leqno]{amsart}
\textwidth=30cc
\baselineskip=16pt 
\usepackage{amssymb,amsmath,amsthm,latexsym}
\usepackage{graphicx}
\usepackage{epsfig}
\newtheorem{theorem}{Theorem}[section]
\newtheorem{lemma}[theorem]{Lemma}

\theoremstyle{remark}
\newtheorem{definition}[theorem]{Definition}
\newtheorem{remark}[theorem]{Remark}
\newtheorem{example}[theorem]{Example}

\newcommand\eL{{\mathcal L}}
\newcommand\M{{\mathcal M}}
\newcommand\R{{\mathcal R}}

\newcommand\Z{{\mathbb Z}}
\newcommand\xt{{(x_0,x_1,\ldots,x_n,x_{n+1})}}

\pagestyle{plain}
\footskip=40pt

\begin{document}

\title{Ternary quasigroups in knot theory}
\date{August 3, 2017}
\author{Maciej Niebrzydowski}
\address[Maciej Niebrzydowski]{Institute of Mathematics\\ 
Faculty of Mathematics, Physics and Informatics\\
University of Gda{\'n}sk, 80-308 Gda{\'n}sk, Poland}
\email{mniebrz@gmail.com}

\keywords{ternary quasigroup, homology, Reidemeister moves, Yoshikawa moves, cocycle invariant, degenerate subcomplex, links on surfaces}
\subjclass[2000]{Primary: 57M27; Secondary:  55N35, 57Q45}

\thispagestyle{empty}

\begin{abstract}
We show that some ternary quasigroups appear naturally as invariants of classical links and links on surfaces. We also note how to obtain from them invariants of Yoshikawa moves. In \cite{Ni17}, we defined homology theory for algebras satisfying two axioms derived from the third Reidemeister move.
In this paper, we show a degenerate subcomplex suitable for ternary quasigroups satisfying these axioms, and corresponding to the first Reidemeister move.
For such ternary quasigroups with an additional condition that the primary operation equals to the second division operation, we also define another  subcomplex, corresponding to the second flat Reidemeister move. Based on the normalized homology, we define cocycle invariants.
\end{abstract}

\maketitle

\section{Ternary quasigroups and knots}

\begin{definition}\label{quasigroup0}
A {\it (combinatorial) ternary quasigroup} is a set $X$ equipped with a ternary operation $T\colon X^3\to X$ such that for a quadruple $(x_1,x_2,x_3,x_0)$
of elements of $X$ satisfying $x_1x_2x_3T=x_0$, specification of any three elements of the quadruple determines the remaining one uniquely.
\end{definition}

\begin{definition}\label{quasigroup}
An {\it (equational) ternary quasigroup} $(X,T,\eL,\M,\R)$ is a set $X$ equipped with ternary operations $T$, $\eL$, $\M$, $\R$ satisfying the following pairs of equations for all $x_1$, $x_2$, $x_3\in X$:
\begin{align}
& (x_1x_2x_3T)x_2x_3\eL=x_1 & & (x_1x_2x_3\eL)x_2x_3T=x_1,\\
& x_1(x_1x_2x_3T)x_3\M=x_2  & & x_1(x_1x_2x_3\M)x_3T=x_2,\\
& x_1x_2(x_1x_2x_3T)\R=x_3 & & x_1x_2(x_1x_2x_3\R)T=x_3.
\end{align}
We call the operations $\eL$, $\M$ and $\R$ the left, middle, and right division, respectively.
\end{definition}

Each equational ternary quasigroup $(X,T,\eL,\M,\R)$ yields a combinatorial ternary quasigroup $(X,T)$, and each combinatorial ternary quasigroup $(X,T)$
yields an equational ternary quasigroup $(X,T,\eL,\M,\R)$ via
\[x_0x_2x_3\eL=x_1, x_1x_0x_3\M=x_2\ \textrm{and}\ x_1x_2x_0\R=x_3,\]
assuming $x_1x_2x_3T=x_0$. See \cite{BelSan,Bel,Sm08} for more details on $n$-ary quasigroups.

In \cite{Ni14} and \cite{Ni17}, we defined some ternary algebras giving classical link invariants via assignment of algebra elements to the regions of a link diagram on the plane, that is, to the components of the complement of a link projection. Here we consider a more general situation of link diagrams placed on compact oriented surfaces (as in Fig. \ref{surf}), with Reidemeister moves applied to them. If the diagrams have only flat crossings (with no over-under information attached to them), then we use flat Reidemeister moves, as in Fig. \ref{flatmoves}. Our considerations apply to classical links in $\mathbb{R}^3$ (or $\mathbb{S}^3$) as well. The subject of link diagrams on oriented compact surfaces is closely related to abstract link diagrams defined in \cite{KK00}, where the relation to virtual links (see \cite{Kau99}) was explained. Our ternary operations generalize checkerboard colorings of abstract link diagrams used in \cite{KaN02}. Diagrams on oriented surfaces were also studied in, for example, \cite{Naoko96} and \cite{CKS01}.

\begin{figure}
\begin{center}
\includegraphics[width=9 cm]{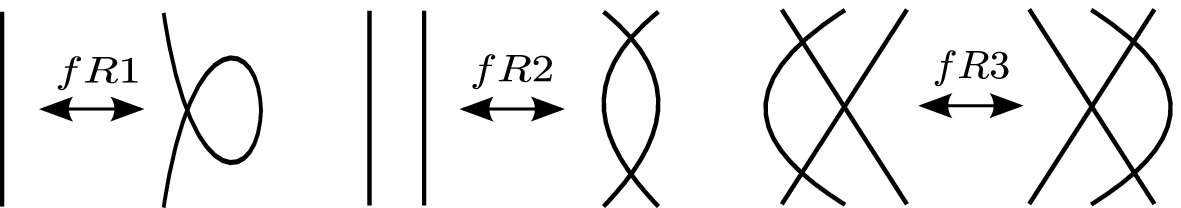}
\caption{}\label{flatmoves}
\end{center}
\end{figure}

\begin{figure}
\begin{center}
\includegraphics[height=5 cm]{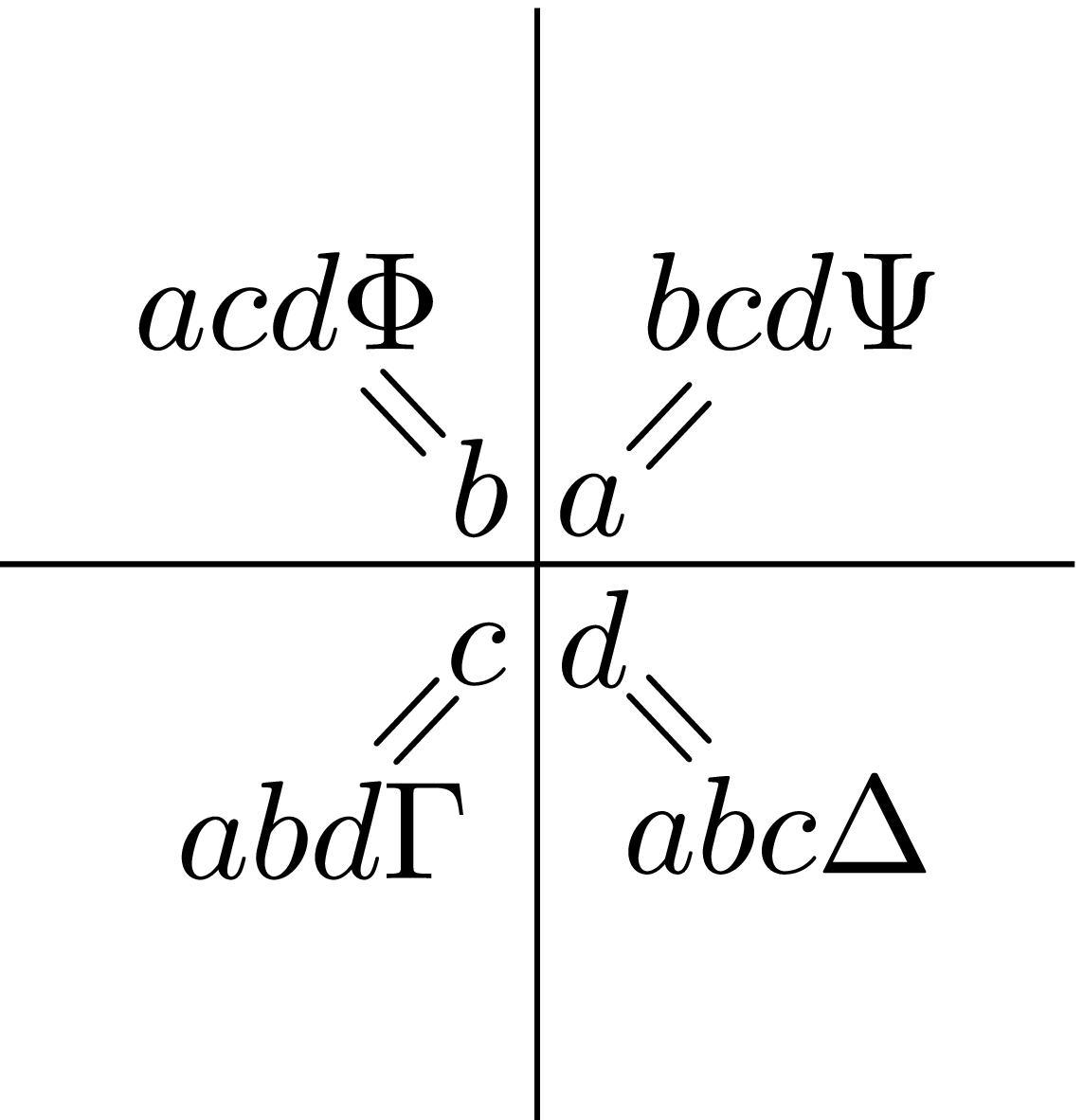}
\caption{}\label{coloringreg}
\end{center}
\end{figure}

\begin{definition}
Let $D$ be a link diagram (or a flat link diagram) on a compact oriented surface $F$, and let $R$ denote the connected components of $F\,\setminus$ universe of $D$. We call elements of $R$ {\it regions}. 
Let the four regions meeting at a crossing be assigned elements of some algebra (that is, a set with operations on it), such that the element in each corner can be expressed using the other three corner elements and some ternary operation. This situation is illustrated in Fig. \ref{coloringreg}. If there is a consistent way of assigning algebra elements to all regions of $D$ on $F$, then we call it a {\it ternary coloring}. 
\end{definition}

The above definition is general, and we only assume that we color with an algebra that has a family of ternary operations used in a predetermined way (they don't even need to be the same for every crossing). For now we assume that the colors of regions passing over or under a surface bridge are unchanged, as in Fig. \ref{ignoringbridge}. In this paper we will investigate the conditions that can be imposed on ternary colorings, so that their number, and homology classes assigned to them, are invariant under Reidemeister moves.

\begin{figure}
\begin{center}
\includegraphics[height=4 cm]{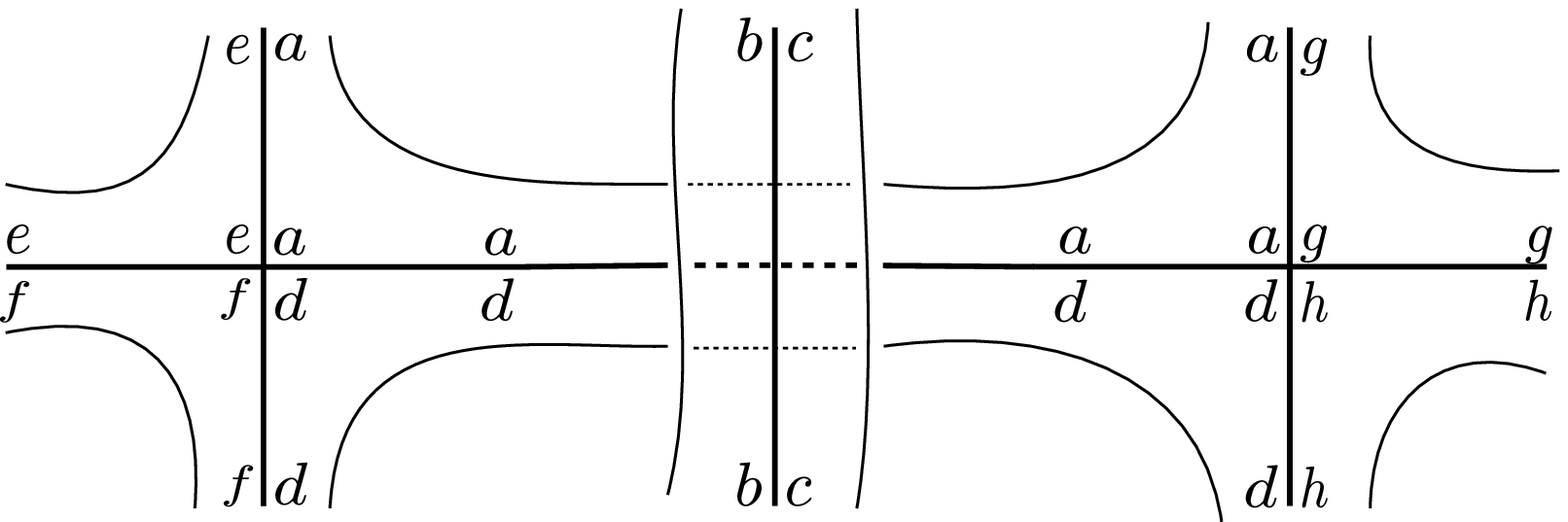}
\caption{}\label{ignoringbridge}
\end{center}
\end{figure}

\begin{figure}
\begin{center}
\includegraphics[height=4 cm]{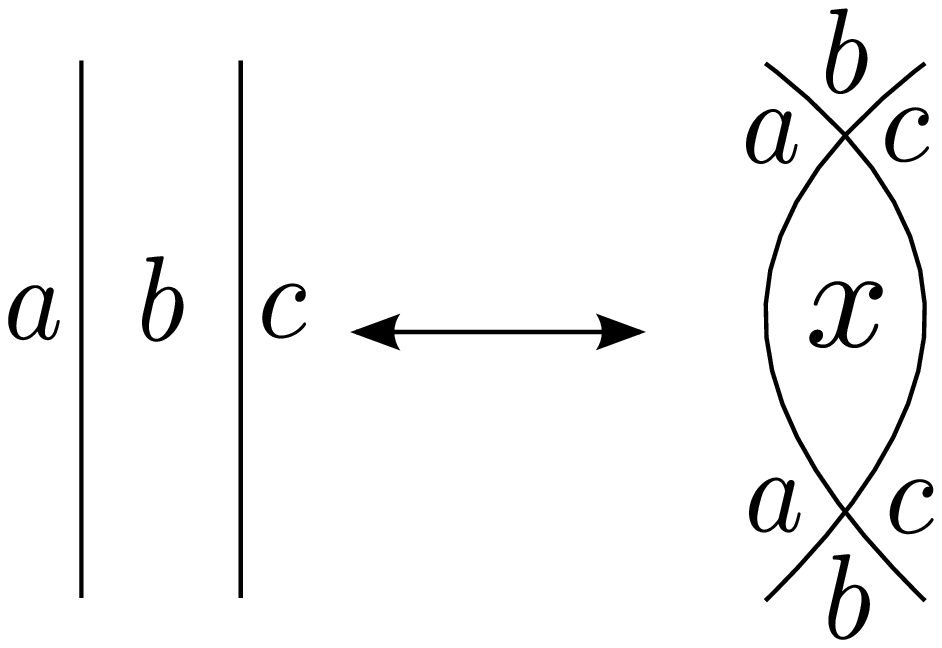}
\caption{}\label{R2gen}
\end{center}
\end{figure}

\begin{figure}
\begin{center}
\includegraphics[height=3.5 cm]{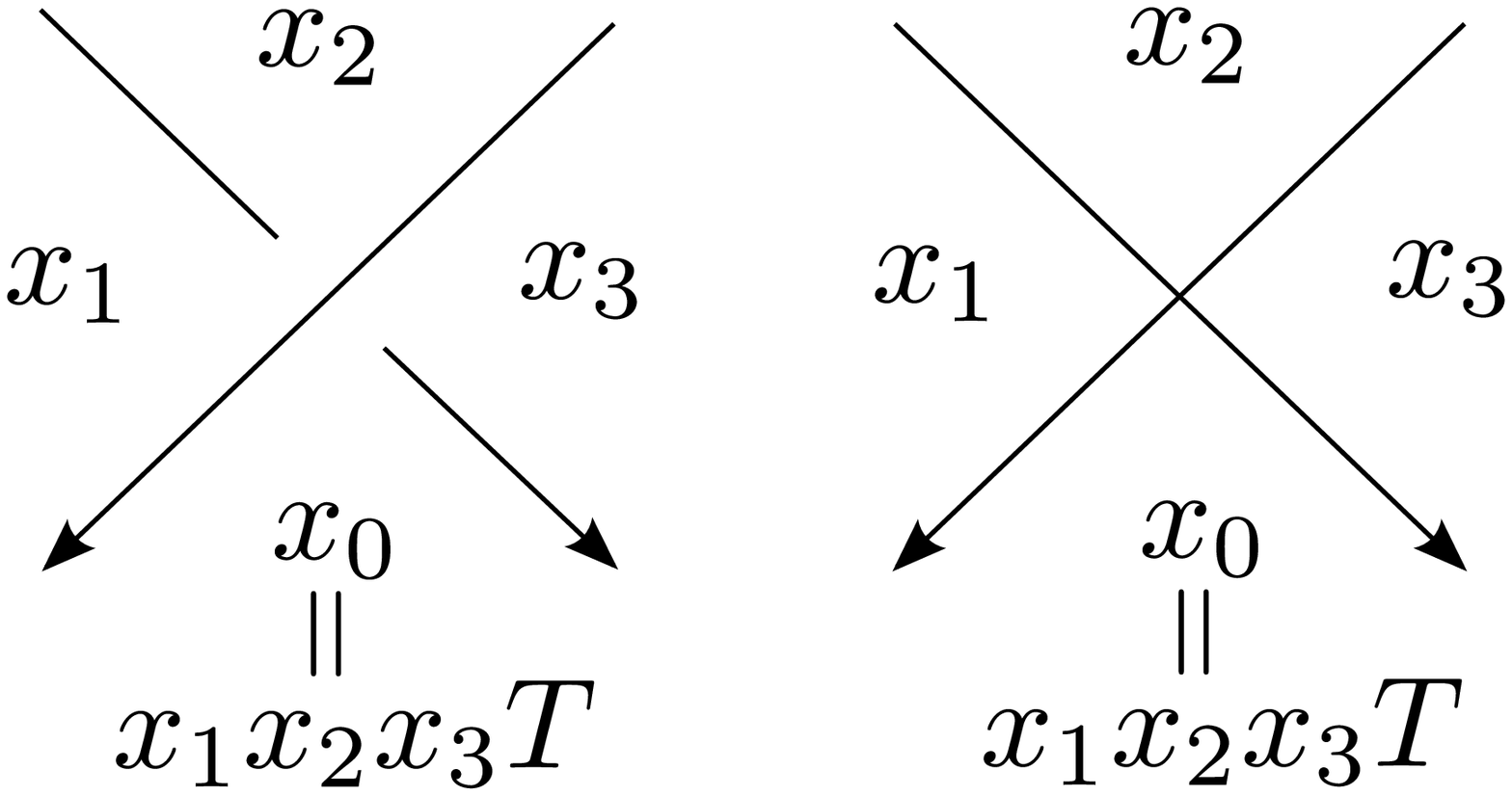}
\caption{}\label{combquasi}
\end{center}
\end{figure}

In an oriented link diagram, each of the four corners around a crossing has its own characterization, i.e., there is a corner adjacent to the two incoming edges, a corner $r$ touching the two outgoing edges, a corner having $r$ on the right, and the one that has it on the left. Fig. \ref{R2gen} represents a schematic colored second Reidemeister move, without specifying the types of crossings. Depending on the orientation, the corner colored by $x$ could be of any of the four types. If we are to have a coloring, then $x$ must exist for any $a$, $b$ and $c$. If the number of colorings of a diagram is to be unchanged by the move, then $x$ has to be unique. Thus, we reach a combinatorial definition of a ternary quasigroup, and we use its primary operation $T$ to color the corner adjacent to the outgoing edges of a flat or a positive classical crossing, see Fig. \ref{combquasi}. The bottom crossing of Fig. \ref{R2gen} yields additional conditions depending on its type.  

\begin{figure}
\begin{center}
\includegraphics[height=9 cm]{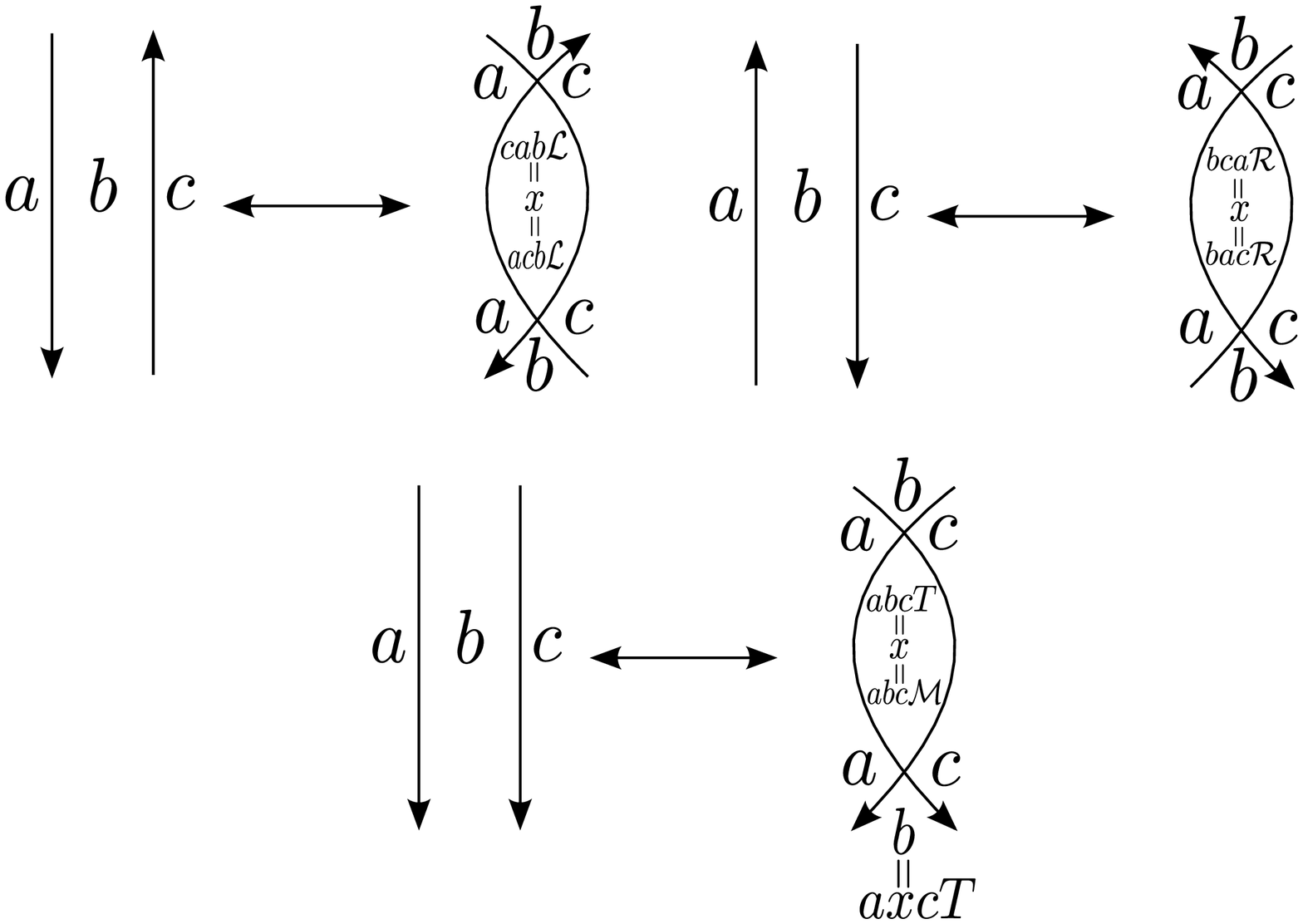}
\caption{}\label{fR2}
\end{center}
\end{figure} 

Let us use the convention from Fig. \ref{combquasi} to color different versions of the second flat Reidemeister move, see Fig. \ref{fR2}. We observe that in the move with parallel orientation of strands
\[
x=abcT=abc\M.
\]
Thus, for flat links diagrams, we will be using ternary quasigroups with $T=\M$.
Also,
\begin{align*}
& cab\eL=acb\eL,\\
& bca\R=bac\R,
\end{align*}
and the permutation
\[
a\square cT\colon X\to X,\ x\mapsto axcT
\] is an involution, for arbitrary elements
$a$, $b$, $c$. These last three observations are really just consequences of the equality $T=\M$.

\begin{figure}
\begin{center}
\includegraphics[height=6.5 cm]{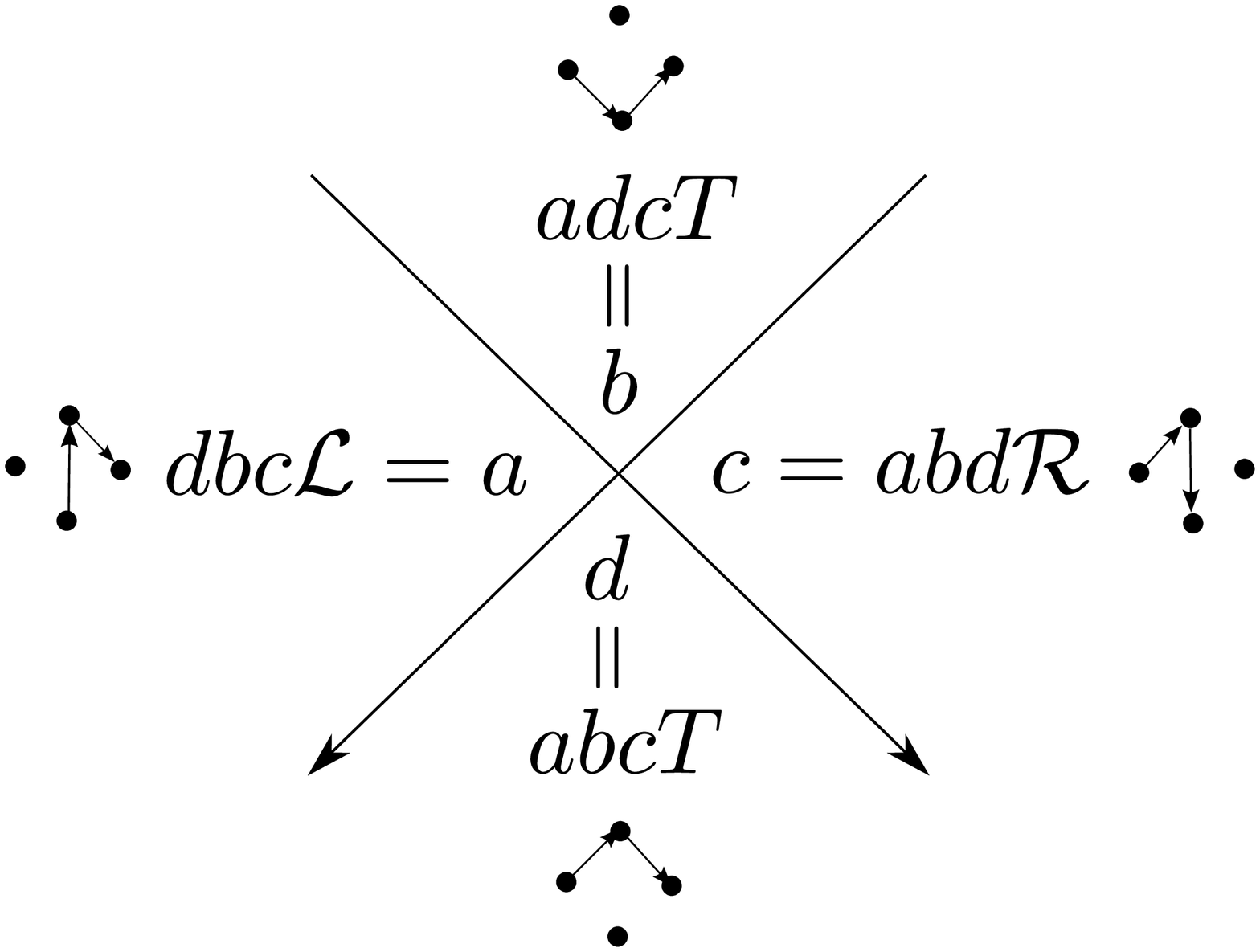}
\caption{}\label{fquasi}
\end{center}
\end{figure} 

To summarize the above considerations, given a flat link diagram on a compact oriented surface $F$, its ternary coloring with colors from a ternary quasigroup $(X,T,\eL,T,\R)$ is determined as in Fig. \ref{fquasi}. The inputs for the operations are taken in the order indicated in the Figure \ref{fquasi} by the dot diagrams. 

From substitutions in the expressions around the crossing in Fig. \ref{fquasi},  we can recover the defining equations in a ternary quasigroup (with $T=\M$).
\begin{align*}
& a=dbc\eL=(abcT)bc\eL=d(adcT)c\eL=(adcT)dc\eL=db(abd\R)\eL,\\
& b=adcT=(dbc\eL)dcT=(bdc\eL)dcT=a(abcT)cT=ad(abd\R)T=ad(adb\R)T,\\
& c=abd\R=(dbc\eL)bd\R=a(adcT)d\R=ad(adcT)\R=ab(abcT)\R,\\
& d=abcT=(dbc\eL)bcT=a(adcT)cT=ab(abd\R)T.
\end{align*}
The equations above that connect
$\eL$ and $\R$ follow from the usual relations between divisions in a ternary quasigroup, together with the aforementioned two-coordinate commutativities.

The invariance under the first Reidemeister move follows from the combinatorial definition of ternary quasigroup: the new color needed for the coloring of the kink exists and is unique (see the right side of Fig. \ref{degR1}), so the number of colorings stays the same.

\begin{figure}
\begin{center}
\includegraphics[height=6.5 cm]{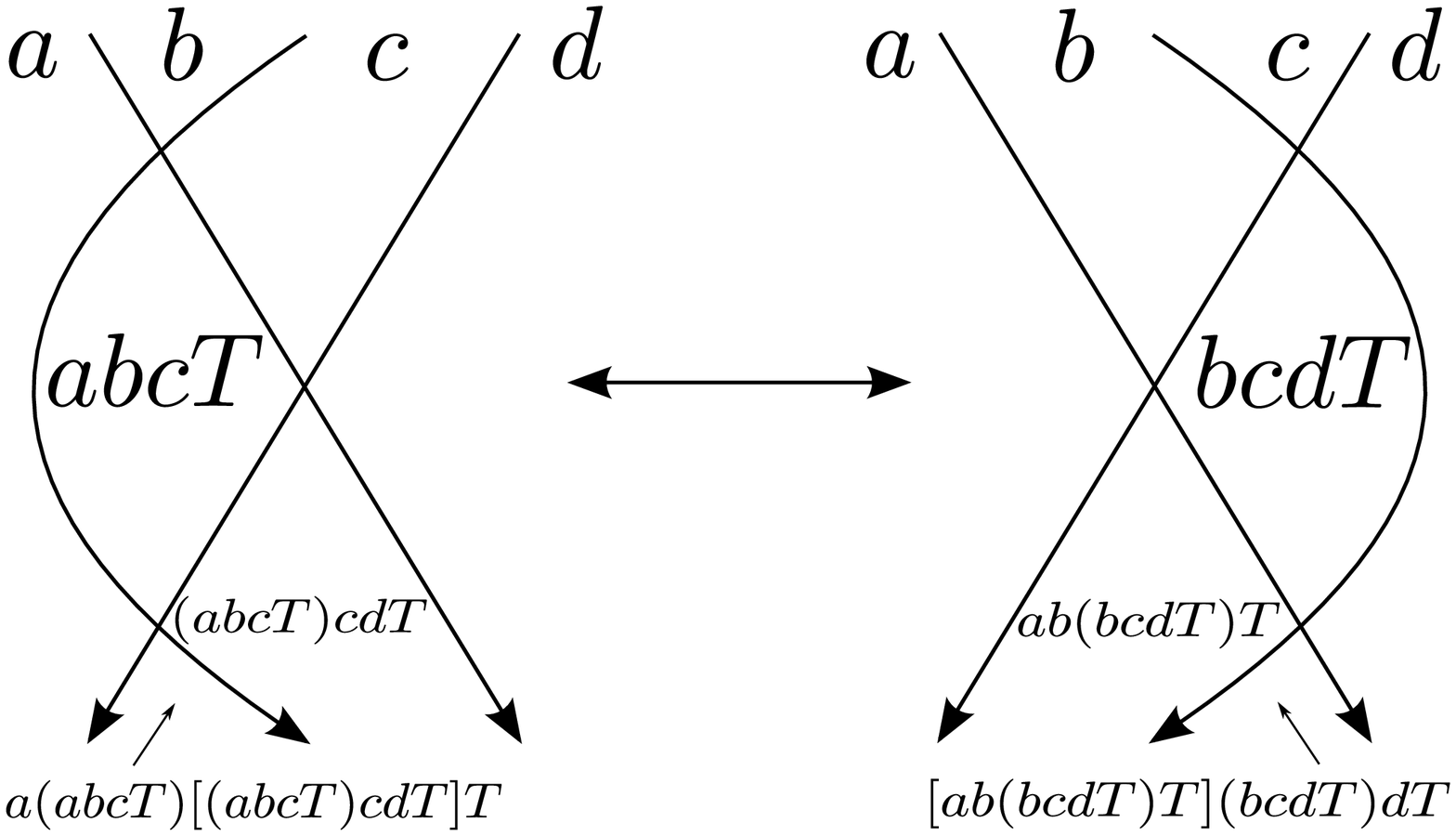}
\caption{}\label{fR3}
\end{center}
\end{figure}

The third flat Reidemeister move, illustrated in Fig. \ref{fR3}, yields two axioms: 
\begin{equation}
\forall_{a,b,c,d\in X} \quad (abcT)cdT=[ab(bcdT)T](bcdT)dT \tag{A3L} 
\end{equation}
\begin{equation}
\forall_{a,b,c,d\in X} \quad ab(bcdT)T=a(abcT)[(abcT)cdT]T  \tag{A3R}
\end{equation}

Note that the right side of A3L (resp. A3R) is obtained from the left side of A3L (resp. A3R) by substitution $c\mapsto bcdT$ (resp. $b\mapsto abcT$ ). 

Thus, to obtain invariants of flat link diagrams under flat Reidemeister moves, one can use ternary quasigroups $(X,T,\eL,T,\R)$ satisfying A3L and A3R. In this paper, we will call such a quasigroup {\it involutory knot-theoretic ternary quasigroup}, and denote it simply by IKTQ.

\begin{figure}
\begin{center}
\includegraphics[height=6.5 cm]{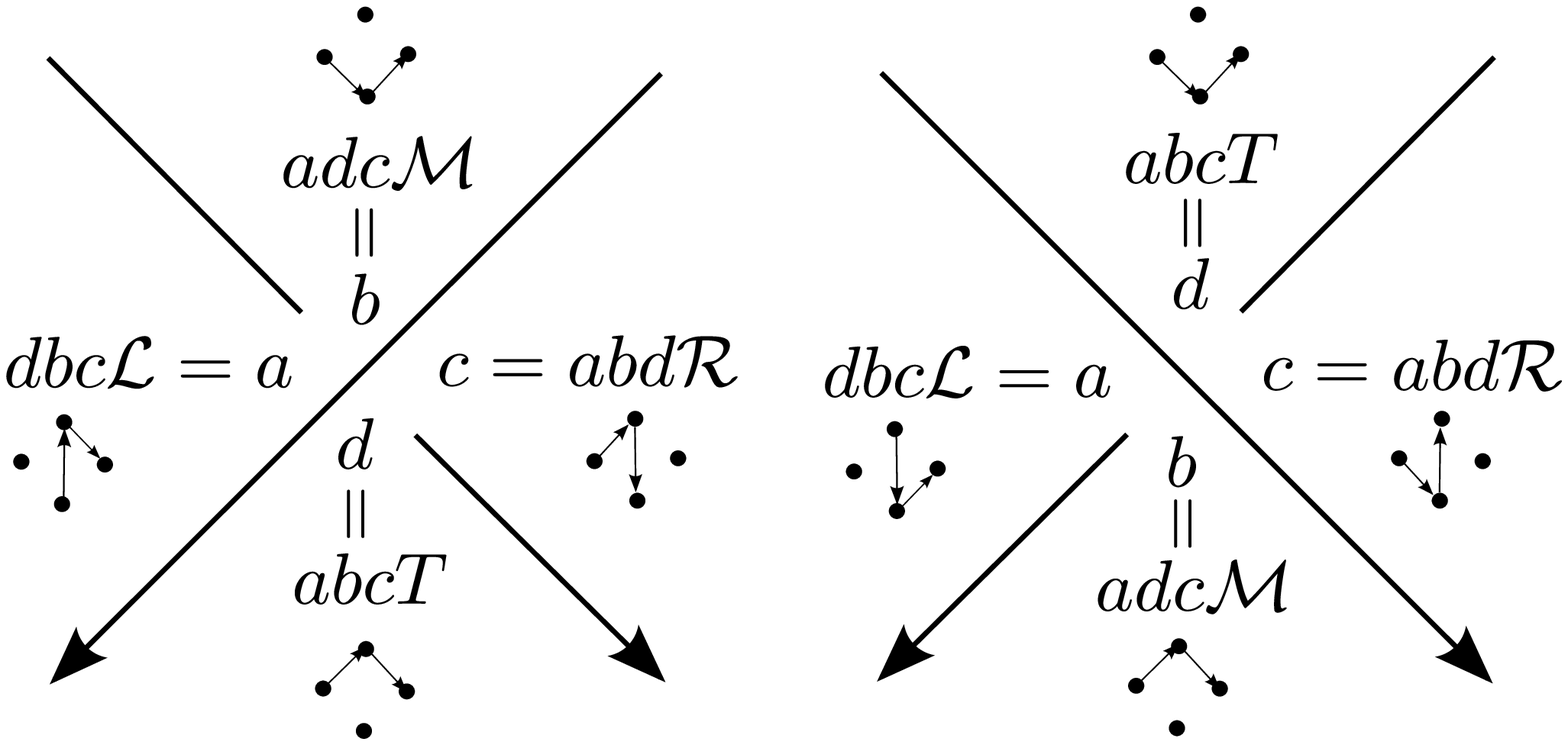}
\caption{}\label{quasi}
\end{center}
\end{figure}

Now we turn our attention to link diagrams on compact, oriented surfaces. Assuming the convention from Fig. \ref{combquasi} for a positive crossing, we need to consider what should be done with a negative crossing. If we try to use for it another set of quasigroup operations (i.e., another primary operation and its three divisions), we need to consider different versions of the second Reidemeister move, in which the region between the two crossings provides connections between the operations associated with a positive crossing and the ones for a negative crossing. There comes a quick realization that the new operations can be expressed using the ones corresponding to a positive crossing, as in Fig. \ref{quasi}. In a negative crossing, the primary operation $T$ is used for the corner adjacent to the incoming edges. Again, we can recover the equational definition of ternary quasigroup $(X,T,\eL,\M,\R)$ by performing substitutions in the equations around the crossings in Fig. \ref{quasi}:
\begin{align*}
& a=dbc\eL=(abcT)bc\eL=d(adc\M)c\eL=db(abd\R)\eL,\\
& b=adc\M=(dbc\eL)dc\M=a(abcT)c\M=ad(abd\R)\M,\\
& c=abd\R=(dbc\eL)bd\R=a(adc\M)d\R=ab(abcT)\R,\\
& d=abcT=(dbc\eL)bcT=a(adc\M)cT=ab(abd\R)T.
\end{align*}
The equations above that do not include $T$ are the usual relations between divisions in a ternary quasigroup. 

The invariance of the number of ternary colorings under the
first and second Reidemeister moves does not require any additional axioms, and follows from the quasigroup structure. The invariance under the third Reidemeister move (in which all the crossings are positive) requires the addition of axioms A3L and A3R. We will call a ternary quasigroup $(X,T,\eL,\M,\R)$ satisfying A3L and A3R a knot-theoretic ternary quasigroup, and denote it by KTQ.

\begin{figure}
\begin{center}
\includegraphics[height=2.5 cm]{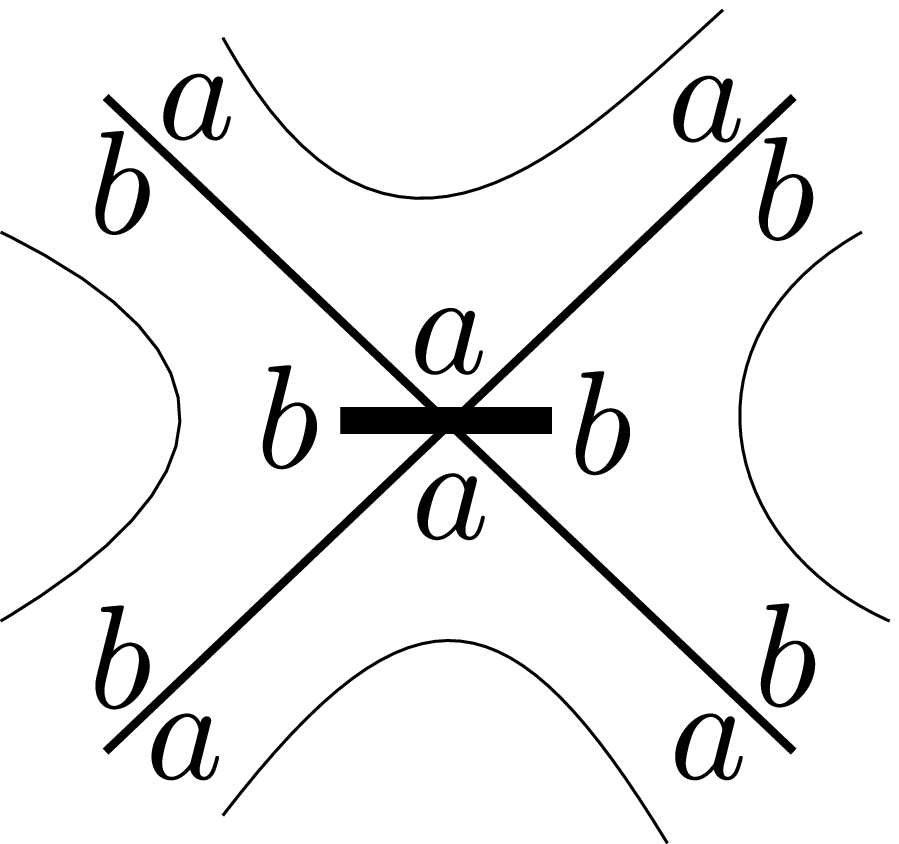}
\caption{}\label{yoshikawa}
\end{center}
\end{figure}

\begin{remark}
We note that one can use KTQ colorings for oriented version of Yoshikawa diagrams on a compact oriented surface $F$, or simply on a plane. See \cite{Yo94} for a description of classical Yoshikawa diagrams, and \cite{KJL15} for a generating set of moves on them. If the convention from the Fig. \ref{yoshikawa} is used around markers (that is, opposite corners are assigned the same color), then the number of ternary colorings is not changed by the Yoshikawa moves on $F$. A special case of such colorings for classical Yoshikawa diagrams was investigated in \cite{KN17}.
\end{remark}

\begin{figure}
\begin{center}
\includegraphics[height=6 cm]{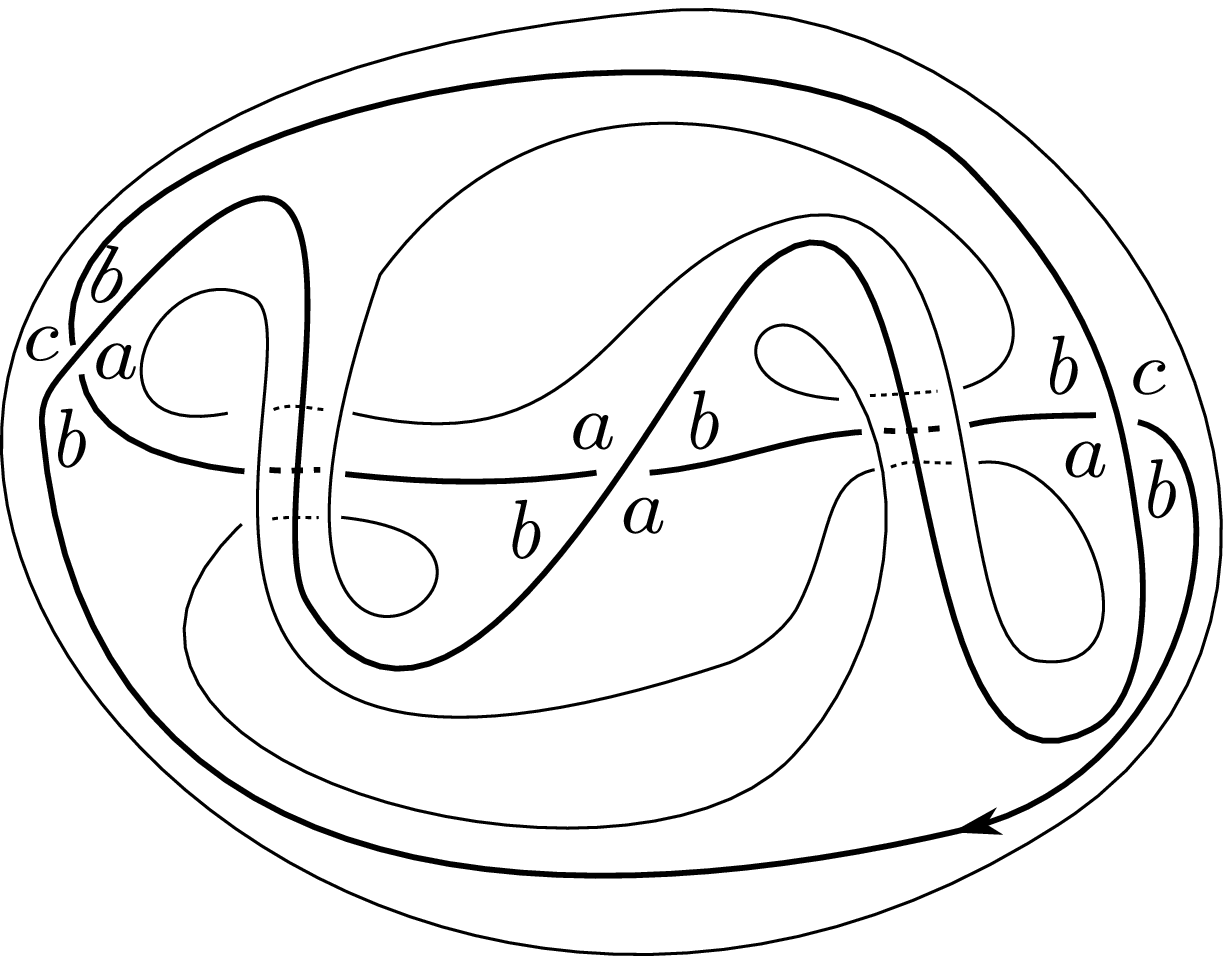}
\caption{}\label{surf}
\end{center}
\end{figure}

\begin{remark}
Consider the category of knot-theoretic ternary quasigroups. Let $D$ be a link diagram on a compact oriented surface $F$. To every such $D$ one can associate a  KTQ($D$) in the following way: its generators correspond to the regions $R$ of $D$ on $F$ and relations correspond to crossings and are as in Fig. \ref{quasi} (one relation among four can be chosen, as they are equivalent). Then two diagrams on $F$ that differ by Reidemeister moves on $F$ have isomorphic KTQ's.
Analogous constructions yield IKTQ($D$) for a flat link diagram $D$ on $F$, and KTQ($D$) for a Yoshikawa diagram $D$ on $F$. Then, any KTQ (resp. IKTQ)  coloring of a diagram can be viewed
as a homomorphism from KTQ($D$) (resp. IKTQ($D$)) to the KTQ (resp. IKTQ) used for the coloring.
\end{remark}

\begin{example}
A KTQ assigned to a knot diagram in 
Fig. \ref{surf} has presentation
\begin{align*}
KTQ(D)& =\langle a,b,c \ |\ babT=c, abaT=b, bcbT=a \rangle\\
& =\langle a,b \ |\ abaT=b, b(babT)bT=a \rangle.
\end{align*}
\end{example}

\section{Homology of KTQs}
In \cite{Ni17} we introduced a homology theory for arbitrary algebras satisfying axioms A3L and A3R. In this paper, we define a subcomplex that works for ternary quasigroups satisfying these axioms (i.e., KTQs). Then we define the homology of KTQs as the normalized homology.
First we recall the construction from \cite{Ni17}. 

\begin{definition}
For a given ternary algebra $X$ satisfying the axioms A3L and A3R,
let $C_n(X)=\mathbb{Z}\langle X^{n+2}\rangle$ be the free abelian group generated by $(n+2)$-tuples $(x_0,x_1,\ldots, x_n,x_{n+1})$ of elements of $X$, for $n\geq -1$, and let $C_{-2}(X)=\mathbb{Z}$. We define a boundary homomorphism
$\partial_n\colon C_n(X) \to C_{n-1}(X)$, by 
\begin{align*}
& \partial_{-1}(x_0)=0,\\
& \partial_0(x_0,x_1)=x_1-x_0,
\end{align*}
and for $n>0$:
\[\partial_n=\partial_n^L-\partial_n^R,\]
where $\partial_n^L\colon C_n(X) \to C_{n-1}(X)$ and 
$\partial_n^R\colon C_n(X) \to C_{n-1}(X)$ have the following definitions. 
\begin{equation*}
\partial_n^L\xt=\sum_{i=0}^n(-1)^i d_i^{n,L}\xt,
\end{equation*}
where $d_i^{n,L}$ is defined inductively by
\begin{align*}
& d_0^{n,L}\xt=(x_1,\ldots,x_n,x_{n+1}),\\
& d_{i}^{n,L}\xt=d_{i-1}^{n,L}\xt[x_i\mapsto x_{i-1}x_ix_{i+1}T]\\
& =d_{i-1}^{n,L}(x_0,\ldots,x_{i-1},x_{i-1}x_ix_{i+1}T,x_{i+1},\ldots,x_{n+1}),
\end{align*}
for $i\in\{1,\ldots,n\}$.
\begin{equation*}
\partial_n^R\xt=\sum_{i=0}^n(-1)^i d_i^{n,R}\xt,
\end{equation*}
where $d_i^{n,R}$ is defined inductively by
\begin{align*}
& d_0^{n,R}\xt=(z_0,z_1\ldots,z_n),\\
& \textrm{where}\ z_0=x_0, z_i=z_{i-1}x_ix_{i+1}T,\ \textrm{for}\ i=1,\ldots,n,\ \textrm{and}\\
& d_{i}^{n,R}\xt=d_{i-1}^{n,R}\xt[x_{i-1}x_ix_{i+1}T\mapsto x_i],\\
& \textrm{for}\ i\in\{1,\ldots,n\}.
\end{align*}
That is, the formula for $d_{i}^{n,R}\xt$ is obtained from \\
$d_{i-1}^{n,R}\xt$ by replacing $x_{i-1}x_ix_{i+1}T$ with $x_i$. 
\end{definition}

\begin{example}
In low dimensions the differential takes the following form:
\begin{align*}
\partial_1(a,b,c)& =(b,c)-(a,abcT)\\
& -(abcT,c)+(a,b),\\
\partial_2(a,b,c,d) & =(b,c,d)-(a,abcT,(abcT)cdT)\\
& -(abcT,c,d)+(a,b,bcdT)\\
& +(ab(bcdT)T,bcdT,d)-(a,b,c),\\
\partial_3(a,b,c,d,e)& =(b,c,d,e)-(a,abcT,(abcT)cdT,[(abcT)cdT]deT)\\
& -(abcT,c,d,e)+(a,b,bcdT,(bcdT)deT)\\
& +(ab(bcdT)T,bcdT,d,e)-(a,b,c,cdeT)\\
& -(ab[bc(cdeT)T]T,bc(cdeT)T,cdeT,e)+(a,b,c,d).
\end{align*}
\end{example}

Let $x$ denote $\xt$. There is another description of $d_i^{n,L}$ and $d_i^{n,R}$, in which their coordinates are defined inductively. 
\[
d_i^{n,L}=(d_{i,1}^{n,L},\ldots,d_{i,k}^{n,L},\ldots,d_{i,n+1}^{n,L})
\] 
is calculated from right to left. For $i\in\{0,\ldots,n\}$ and $k\in\{1,\ldots,n+1\}$,
\begin{equation*}\label{leftcoord}
d_{i,k}^{n,L}x = \left\{
\begin{array}{rl}
x_{k-1}x_k(d_{i,k+1}^{n,L}x)T & \text{if } k\leq  i\\
x_k & \text{if } k > i.
\end{array} \right.
\end{equation*}
\[
d_i^{n,R}=(d_{i,0}^{n,R},\ldots,d_{i,k}^{n,R},\ldots,d_{i,n}^{n,R})
\] 
is calculated from left to right. For $i\in\{0,\ldots,n\}$ and $k\in\{0,\ldots,n\}$,
\begin{equation*}\label{rightcoord}
d_{i,k}^{n,R}x = \left\{
\begin{array}{rl}
(d_{i,k-1}^{n,R}x)x_kx_{k+1}T & \text{if } k > i\\
x_k & \text{if } k \leq i.
\end{array} \right.
\end{equation*}

Now we are ready to define a subcomplex corresponding to KTQs.

\begin{definition}\label{degs}
For a ternary quasigroup $(X,T,\eL,\M,\R)$ satisfying axioms A3L and A3R, and for $n\geq 1$,
let $C_n^D(X)$ denote the free abelian group generated by $(n+2)$-tuples 
$x=(x_0,x_1,\ldots, x_n,x_{n+1})$ of elements of $X$ satisfying at least one of the conditions:
\begin{itemize}
\item[(D1)] $x$ contains $a$, $b$, $abb\R$ on three consecutive coordinates, for some $a$ and $b\in X$;
\item[(D2)] $x$ contains $bba\eL$, $b$, $a$ on three consecutive coordinates, for some $a$ and $b\in X$. 
\end{itemize}
For $n<1$, we take $C_n^D(X)=0$.
\end{definition}

\begin{theorem}\label{degL}
For a KTQ,
\[\partial_n^L(C_n^D)\subset C_{n-1}^D.\]
\end{theorem}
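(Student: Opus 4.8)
The plan is to verify the inclusion on generators, treating one degenerate tuple at a time. First I would record a uniform description of the degeneracy: using the division axioms of Definition~\ref{quasigroup}, a tuple lies in $C_n^D(X)$ if and only if it has three consecutive coordinates $(p,q,r)$ with $pqrT=q$. Indeed, the triple $a,b,abb\R$ of (D1) is exactly the general solution in $r$ of $ab\,r\,T=b$, since $ab(abb\R)T=b$, and dually $bba\eL,b,a$ of (D2) is the general solution in $p$ of $p\,b\,a\,T=b$. Thus I may fix a generator $x=(x_0,\dots,x_{n+1})$ together with an index $j\in\{1,\dots,n\}$ such that $x_{j-1}x_jx_{j+1}T=x_j$, and it suffices to prove $\partial_n^Lx\in C_{n-1}^D$; every triple produced along the way will again satisfy $pqrT=q$, hence be of type (D1)/(D2).

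Next I would use the coordinate form $d_i^{n,L}x=(y_1,\dots,y_{n+1})$, where $y_k=x_k$ for $k>i$ and $y_k=x_{k-1}x_k\,y_{k+1}\,T$ for $k\le i$, and split $\partial_n^L=\sum_{i=0}^n(-1)^id_i^{n,L}$ according to the position of $i$ relative to $j$. For $i\le j-2$ the entries $x_{j-1},x_j,x_{j+1}$ occupy positions $>i$ and are copied unchanged, so $d_i^{n,L}x$ still carries the degenerate triple at position $j$ and lies in $C_{n-1}^D$. For the adjacent pair $i=j-1$ and $i=j$ I would show $d_{j-1}^{n,L}x=d_{j}^{n,L}x$: in $d_{j}^{n,L}x$ the degeneracy forces $y_j=x_{j-1}x_jx_{j+1}T=x_j$, after which the two faces agree coordinate by coordinate (positions $\ge j$ read $x_j,x_{j+1},\dots$ in both, position $j-1$ equals $x_{j-2}x_{j-1}x_jT$ in both, and the downward recursion coincides). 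Carrying the opposite signs $(-1)^{j-1}$ and $(-1)^{j}$, these two terms cancel.

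The main obstacle is the range $i\ge j+1$, where $x_{j-1},x_j,x_{j+1}$ are all swallowed by the $T$-recursion and the degeneracy is no longer visible; this is where I expect to need axiom A3L. Writing $y_j=x_{j-1}x_j\,y_{j+1}\,T$ and $y_{j+1}=x_jx_{j+1}\,y_{j+2}\,T$ from the recursion, I compute
\[
y_jy_{j+1}y_{j+2}T
=(x_{j-1}x_j\,y_{j+1}\,T)\,y_{j+1}\,y_{j+2}\,T
=(x_{j-1}x_jx_{j+1}T)\,x_{j+1}\,y_{j+2}\,T
=x_j\,x_{j+1}\,y_{j+2}\,T
=y_{j+1},
\]
where the first and last equalities unwind the recursion, the second is A3L applied to $(a,b,c,d)=(x_{j-1},x_j,x_{j+1},y_{j+2})$, and the third uses $x_{j-1}x_jx_{j+1}T=x_j$. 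Hence $d_i^{n,L}x$ acquires a fresh degenerate triple $(y_j,y_{j+1},y_{j+2})$ centred at position $j+1$ and again lies in $C_{n-1}^D$. Recognising that the recursion reproduces exactly the two sides of A3L, which the degeneracy then collapses to the middle entry, is the crux; the remaining difficulty is purely the index bookkeeping in the coordinate recursion.

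Finally I would dispose of the boundary cases. When $j=1$ the range $i\le j-2$ is empty and the cancelling pair is simply $d_0^{n,L}x=d_1^{n,L}x$; when $j=n$ the range $i\ge j+1$ is empty since $i\le n$; and for $n=1$, where $C_0^D=0$, the only faces are $d_0^{1,L}x$ and $d_1^{1,L}x$, which coincide, so $\partial_1^Lx=0$, as required. Assembling the three ranges, each summand of $\partial_n^Lx$ either belongs to $C_{n-1}^D$ or is annihilated by its neighbour, whence $\partial_n^Lx\in C_{n-1}^D$; since the degenerate generators span $C_n^D$, this yields $\partial_n^L(C_n^D)\subset C_{n-1}^D$.
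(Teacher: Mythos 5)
Your proof is correct and follows essentially the same route as the paper: the same three-way split on the face index (faces with $i$ small enough that the degenerate triple is copied verbatim, the adjacent pair of faces that coincide and cancel by their opposite signs, and the remaining faces where axiom A3L produces a fresh degeneracy shifted one position), with the paper's inductive coordinate description of $d_i^{n,L}$. Your one departure is cosmetic but pleasant: noting that (D1) and (D2) are each equivalent to the single equation $pqrT=q$ on three consecutive coordinates lets you verify the A3L step in one line, where the paper instead shows the image triple has the explicit (D2) form by manipulating $\eL$-expressions.
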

\begin{proof}
We note that $d_i^{n,L}x$ contains at the end the sequence
$x_{i+1},\ldots,x_{n+1}$. Suppose that $x$ is such that $x_j$ is the first element of the triple $a$, $b$, $abb\R$, or of the triple 
$bba\eL$, $b$, $a$. Then this triple occurs also in all
$d_i^{n,L}x$ with $i\in\{0,\ldots, j-1\}$.
Now consider $i=j+1$:
\begin{align*}
&d_{j+1}^{n,L}(x_0,\ldots,x_{j-1},a,b,abb\R,x_{j+3},\ldots,x_{n+1})\\
& =d_j^{n,L}(x_0,\ldots,x_{j-1},a,ab(abb\R)T,abb\R,x_{j+3},\ldots,x_{n+1})\\
& =d_j^{n,L}(x_0,\ldots,x_{j-1},a,b,abb\R,x_{j+3},\ldots,x_{n+1}),\\
&d_{j+1}^{n,L}(x_0,\ldots,x_{j-1},bba\eL,b,a,x_{j+3},\ldots,x_{n+1})\\
& =d_j^{n,L}(x_0,\ldots,x_{j-1},bba\eL,(bba\eL)baT,a,x_{j+3},\ldots,x_{n+1})\\
& =d_j^{n,L}(x_0,\ldots,x_{j-1},bba\eL,b,a,x_{j+3},\ldots,x_{n+1}).
\end{align*}
However, in $\partial_n^L$, $d_{j}^{n,L}x$ and $d_{j+1}^{n,L}x$ appear with opposite signs, so there is a reduction. Now let $j+2\leq i \leq n$. We will show that if $x$ satisfies the condition (D1), then 
$d_i^{n,L}x$ satisfies (D2); more precisely, it contains the triple
\[(d^{n,L}_{i,j+2}x)(d^{n,L}_{i,j+2}x)(d^{n,L}_{i,j+3}x)\eL,\ d^{n,L}_{i,j+2}x,\ 
d^{n,L}_{i,j+3}x.\]
First, note that in the (D1) case we have
\begin{align*}
d_{i,j+1}^{n,L}x & =x_jx_{j+1}(d_{i,j+2}^{n,L}x)T=
x_jx_{j+1}(x_{j+1}x_{j+2}(d_{i,j+3}^{n,L}x)T)T\\
& =ab[b(abb\R)(d_{i,j+3}^{n,L}x)T]T,
\end{align*}
and 
\[
d_{i,j+2}^{n,L}x =x_{j+1}x_{j+2}(d_{i,j+3}^{n,L}x)T
=b(abb\R)(d_{i,j+3}^{n,L}x)T.
\]
Now we will show the equation
\[
d_{i,j+1}^{n,L}x=(d^{n,L}_{i,j+2}x)(d^{n,L}_{i,j+2}x)(d^{n,L}_{i,j+3}x)\eL,
\]
by transforming the equality
\[
b(abb\R)(d_{i,j+3}^{n,L}x)T=b(abb\R)(d_{i,j+3}^{n,L}x)T.
\]
First, from the definition of ternary quasigroup
\[
[ab(abb\R)T](abb\R)(d_{i,j+3}^{n,L}x)T=b(abb\R)(d_{i,j+3}^{n,L}x)T.
\]
Next, using the axiom A3L
\[
\{ab[b(abb\R)(d_{i,j+3}^{n,L}x)T]T\}[b(abb\R)(d_{i,j+3}^{n,L}x)T](d_{i,j+3}^{n,L}x)T=b(abb\R)(d_{i,j+3}^{n,L}x)T.
\]
Finally, using the defining equations of ternary quasigroups
\[
[b(abb\R)(d_{i,j+3}^{n,L}x)T][b(abb\R)(d_{i,j+3}^{n,L}x)T](d_{i,j+3}^{n,L}x)\eL=ab[b(abb\R)(d_{i,j+3}^{n,L}x)T]T,
\]
which is the equation that we wanted to show. Now suppose that $x$ satisfies (D2), in which case we have
\[
d_{i,j+1}^{n,L}x=(bba\eL)b[ba(d_{i,j+3}^{n,L}x)T]T,
\]
and 
\[
d_{i,j+2}^{n,L}x = ba(d_{i,j+3}^{n,L}x)T.
\]
Again, using the quasigroup properties and the axiom A3L, we will show the equation
\[
d_{i,j+1}^{n,L}x=(d^{n,L}_{i,j+2}x)(d^{n,L}_{i,j+2}x)(d^{n,L}_{i,j+3}x)\eL,
\]
by modifying the equality
\[
ba(d_{i,j+3}^{n,L}x)T=ba(d_{i,j+3}^{n,L}x)T
\]
as follows:
\[
[(bba\eL)baT]a(d_{i,j+3}^{n,L}x)T=ba(d_{i,j+3}^{n,L}x)T,
\]
\[
\{(bba\eL)b[ba(d_{i,j+3}^{n,L}x)T]T\}[ba(d_{i,j+3}^{n,L}x)T](d_{i,j+3}^{n,L}x)T=ba(d_{i,j+3}^{n,L}x)T,
\]
\[
[ba(d_{i,j+3}^{n,L}x)T][ba(d_{i,j+3}^{n,L}x)T](d_{i,j+3}^{n,L}x)\eL=
(bba\eL)b[ba(d_{i,j+3}^{n,L}x)T]T.
\]
The last line is the desired equation.
\end{proof}

\begin{definition}
Given a ternary operation $T$, let $\hat{T}$ be
defined by \[xyz\hat{T}=zyxT.\]
\end{definition}

The following two lemmas are obtained by checking the definitions.

\begin{lemma}\label{revquasi}
$(X,T,T_1,T_2,T_3)$ is a ternary quasigroup $\iff$ $(X,\hat{T},\hat{T_3},\hat{T_2},\hat{T_1})$ is a ternary quasigroup.
\end{lemma}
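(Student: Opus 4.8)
The plan is to unwind both directions of the biconditional purely from the definitions, using the correspondence between the defining equations of a ternary quasigroup and the reversal operation $\hat{T}$ given by $xyz\hat{T}=zyxT$. The key observation is that conjugating by coordinate-reversal swaps the roles of the left and right divisions while fixing the middle division, and this is exactly what the claimed equivalence encodes: $\hat{T_3}$ plays the role of the left division, $\hat{T_1}$ the role of the right division, and $\hat{T_2}$ the middle division, for the reversed operation $\hat{T}$.

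Concretely, I would first record that $(X,T,T_1,T_2,T_3)$ being a ternary quasigroup means the three pairs of equations in Definition \ref{quasigroup} hold, namely that $T_1$, $T_2$, $T_3$ invert $T$ in the first, second, and third coordinate respectively. Then I would substitute $xyz\hat T=zyxT$ (and the analogous formulas $xyz\hat{T_k}=zyxT_k$) into the three pairs of equations written for $(X,\hat T,\hat{T_3},\hat{T_2},\hat{T_1})$, and show each one reduces to one of the original six equations after relabeling variables $x_1\leftrightarrow x_3$. For instance, the first pair for $\hat T$ (the left-division equations, with left division $\hat{T_3}$) becomes, upon reversing coordinates, precisely the third pair for $T$ (the right-division equations with $T_3$), and vice versa; the middle pair maps to the middle pair. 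Because $\hat{\hat T}=T$, the forward direction immediately gives the reverse direction by applying the same argument to $(X,\hat T,\hat{T_3},\hat{T_2},\hat{T_1})$, so only one implication needs to be checked by hand.

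The heart of the verification is bookkeeping: each of the six equations has the pattern "apply $T$ (or a division) and then its partner recovers the first, second, or third input," and one must confirm that coordinate reversal consistently carries the $k$-th-coordinate inversion property for $T$ to the $(4-k)$-th-coordinate inversion property for $\hat T$. I would display one representative computation in full — say, that $(x_1x_2x_3\hat T)x_2x_3\hat{T_3}=x_1$ is equivalent under the reversal to $x_1x_2(x_1x_2x_3T)T_3=x_3$ — and then note that the remaining five follow identically. Since this lemma is stated as obtained "by checking the definitions," the write-up can be brief and need not grind through all six cases.

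I do not anticipate a genuine obstacle here; the only thing to be careful about is the indexing, since reversal sends coordinate $k$ to coordinate $4-k$, so one must not conflate the statement "$\hat{T_3}$ is the left division of $\hat T$" with "$T_3$ is the left division of $T$." Keeping the left/middle/right labels attached to coordinate positions rather than to the letters $L,M,R$ is what makes the swap transparent and prevents an off-by-index error.
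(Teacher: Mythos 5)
Your proposal is correct and follows essentially the same route as the paper, which simply states that this lemma is ``obtained by checking the definitions'' and omits the details you supply. Your bookkeeping is right: reversal sends coordinate $k$ to coordinate $4-k$, so the left/right divisions swap while the middle is fixed, and the involutivity $\hat{\hat T}=T$ correctly reduces the biconditional to one implication.
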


\begin{lemma}\label{rev1}
$T$ satisfies A3R if and only if $\hat{T}$ satisfies A3L.\\
$T$ satisfies A3L if and only if $\hat{T}$ satisfies A3R.
\end{lemma}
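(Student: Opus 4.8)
The plan is to expand axiom A3L written for the operation $\hat{T}$ directly, using the defining relation $xyz\hat{T}=zyxT$, and to recognize the result as axiom A3R for $T$ after a relabeling of variables. First I would write down A3L for $\hat{T}$, namely
\[
(abc\hat{T})cd\hat{T}=[ab(bcd\hat{T})\hat{T}](bcd\hat{T})d\hat{T},
\]
and then rewrite every occurrence of $\hat{T}$ in terms of $T$, working from the innermost operations outward. Since $\hat{T}$ reverses the order of its three arguments, each application flips the corresponding expression; carrying this out on both sides turns the identity into
\[
dc(cbaT)T=d(dcbT)[(dcbT)baT]T.
\]

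The key step is then to apply the relabeling $(a,b,c,d)\mapsto(d,c,b,a)$, which is a bijection of $X^4$. Under it the left-hand side becomes $ab(bcdT)T$ and the right-hand side becomes $a(abcT)[(abcT)cdT]T$, so the displayed equation is precisely A3R for $T$. Because a universally quantified identity over $X^4$ is preserved by any bijective relabeling of its variables, the statements \emph{``$\hat{T}$ satisfies A3L''} and \emph{``$T$ satisfies A3R''} are logically equivalent, which establishes the first claim in both directions simultaneously.

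For the second claim I would simply observe that $\hat{\hat{T}}=T$, since $xyz\hat{\hat{T}}=zyx\hat{T}=xyzT$. Applying the first claim with $T$ replaced by $\hat{T}$ gives \emph{``$\hat{T}$ satisfies A3R $\iff$ $\hat{\hat{T}}$ satisfies A3L''}, which reads \emph{``$\hat{T}$ satisfies A3R $\iff$ $T$ satisfies A3L''}, as desired.

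The only genuine difficulty is bookkeeping: the argument-reversal induced by $\hat{T}$ must be tracked consistently through the nested expressions, and the correct relabeling must be identified. There is no structural obstacle, because the entire content of the lemma is the observation that $\hat{T}$ interchanges the first and third input slots, and this exchange is exactly what converts the ``left'' form of the Reidemeister-III axiom into its ``right'' form.
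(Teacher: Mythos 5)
Your proposal is correct and is exactly the verification the paper has in mind: the paper dismisses Lemma \ref{rev1} with ``obtained by checking the definitions,'' and your expansion of A3L for $\hat{T}$ into $dc(cbaT)T=d(dcbT)[(dcbT)baT]T$ followed by the bijective relabeling $(a,b,c,d)\mapsto(d,c,b,a)$ is precisely that check, carried out explicitly and accurately. Your derivation of the second equivalence from the first via $\hat{\hat{T}}=T$ is also sound, so nothing is missing.
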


Let $y^r$ denote reversing the order of the elements of the tuple $y$;
linear extension of this transformation will be denoted with the same symbol.
When two or more operators are considered, their symbols are added to the differentials, as in the following lemma from \cite{Ni17}.
\begin{lemma}\label{convert}
$d_i^{n,R,T}x=(d_{n-i}^{n,L,\hat{T}}x^r)^r$ for $i\in\{0,\ldots,n\}$.
\end{lemma}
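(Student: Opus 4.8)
The plan is to reduce the tuple identity to a coordinate-by-coordinate identity and then prove the latter by a single induction. First I would fix the reversal convention: writing $x^r=(y_0,\ldots,y_{n+1})$ with $y_m=x_{n+1-m}$, and recording that $d_i^{n,R,T}x$ carries coordinates indexed $0,\ldots,n$ whereas $d_{n-i}^{n,L,\hat T}(x^r)$ carries coordinates indexed $1,\ldots,n+1$. Under these conventions the asserted equality $d_i^{n,R,T}x=(d_{n-i}^{n,L,\hat T}x^r)^r$ unwinds to the single coordinate statement
\[
d_{i,j}^{n,R,T}x = d_{n-i,\,n+1-j}^{n,L,\hat T}(x^r), \qquad j\in\{0,\ldots,n\},
\]
which I would prove by induction on $j$ increasing from $0$, using the right-to-left coordinate recursion for $d^{n,L}$ and the left-to-right recursion for $d^{n,R}$ recalled above.

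The first observation is that the two case splits match under the substitution $i'=n-i$, $j'=n+1-j$: indeed $j\le i$ is equivalent to $j'>i'$, and $j>i$ is equivalent to $j'\le i'$. Hence in the base range $j\le i$ both differentials take their non-recursive branch, with the right-hand one returning $x_j$ and the left-hand one returning $y_{j'}=x_{n+1-j'}=x_j$, so the two sides agree with no further work. In the recursive range $j>i$ the right differential yields $(d_{i,j-1}^{n,R,T}x)\,x_j x_{j+1}T$, while the left differential yields $y_{j'-1}\,y_{j'}\,(d_{i',\,j'+1}^{n,L,\hat T}y)\,\hat T$.

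To close the inductive step I would translate the reversed indices back via $y_{j'}=x_j$ and $y_{j'-1}=x_{j+1}$, and invoke the inductive hypothesis at index $j-1$: since $n+1-(j'+1)=j-1$, this gives precisely $d_{i',\,j'+1}^{n,L,\hat T}y=d_{i,j-1}^{n,R,T}x$. The left-hand expression then reads $x_{j+1}\,x_j\,(d_{i,j-1}^{n,R,T}x)\,\hat T$, and applying the argument-reversing definition $abc\hat T=cba\,T$ collapses it to $(d_{i,j-1}^{n,R,T}x)\,x_j x_{j+1}T$, matching the right-hand expression. This completes the induction and hence the coordinate identity, which reassembles into the tuple identity by the reversal convention.

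I expect the only real obstacle to be bookkeeping rather than mathematics: one must keep the two distinct coordinate ranges ($0,\ldots,n$ versus $1,\ldots,n+1$) straight and verify that increasing $j$ on the right corresponds to decreasing $j'$ on the left, so that the two recursions—one built left-to-right, the other right-to-left—run in the same direction under reversal and the inductive hypothesis is always available at the correct index. Notably, neither the quasigroup structure nor the axioms A3L and A3R are needed here; the identity holds for an arbitrary ternary operation $T$, the single algebraic input being the definition of $\hat T$.
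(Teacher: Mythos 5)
Your argument is correct, but there is nothing in this paper to compare it against: Lemma \ref{convert} is stated here without proof, being imported verbatim from \cite{Ni17}. Your proof therefore serves as a self-contained substitute for the omitted argument, and it is the natural one: reduce the tuple identity to the coordinate identity $d_{i,j}^{n,R,T}x = d_{n-i,\,n+1-j}^{n,L,\hat{T}}(x^r)$ for $j\in\{0,\ldots,n\}$ and induct on $j$. The bookkeeping you flag as the only risk does check out: writing $y_m=x_{n+1-m}$, $i'=n-i$, $j'=n+1-j$, the branch conditions of the two coordinate recursions correspond ($j\le i \iff j'>i'$); in the non-recursive branch both sides equal $x_j$; and in the recursive branch the left recursion gives $y_{j'-1}y_{j'}(d_{i',j'+1}^{n,L,\hat{T}}y)\hat{T} = x_{j+1}x_j(d_{i,j-1}^{n,R,T}x)\hat{T} = (d_{i,j-1}^{n,R,T}x)x_jx_{j+1}T$, using the inductive hypothesis at $j-1$ (available because $j>i\ge 0$ forces $j\ge 1$, and because the left recursion at $j'$ calls $j'+1$, which corresponds exactly to $j-1$) together with the definition $xyz\hat{T}=zyxT$. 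Your closing remark is also accurate and worth keeping: the identity uses neither the quasigroup divisions nor the axioms A3L/A3R, which enter only in proving $\partial\circ\partial=0$ and the subcomplex statements (Theorems \ref{degL}, \ref{degR}), not in this purely combinatorial reversal identity.
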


\begin{theorem}\label{degR}
For a KTQ,
\[\partial_n^R(C_n^D)\subset C_{n-1}^D.\]
\end{theorem}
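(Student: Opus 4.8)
The plan is to deduce this from Theorem \ref{degL} by exploiting the duality between $T$ and $\hat{T}$ supplied by Lemmas \ref{revquasi}, \ref{rev1}, and \ref{convert}. Since $(X,T,\eL,\M,\R)$ is a KTQ, Lemma \ref{revquasi} shows that $(X,\hat{T},\hat{\R},\hat{\M},\hat{\eL})$ is again a ternary quasigroup, and Lemma \ref{rev1} shows that $\hat{T}$ satisfies both A3L (from A3R for $T$) and A3R (from A3L for $T$). Hence $\hat{T}$ is itself a KTQ, and Theorem \ref{degL} applies to it: writing $C_n^{D,\hat{T}}$ for the degenerate subcomplex built from $\hat{T}$, we have $\partial_n^{L,\hat{T}}(C_n^{D,\hat{T}})\subset C_{n-1}^{D,\hat{T}}$.

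First I would convert $\partial_n^{R,T}$ into $\partial_n^{L,\hat{T}}$. Using Lemma \ref{convert} and reindexing by $j=n-i$,
\[
\partial_n^{R,T}x=\sum_{i=0}^n(-1)^i d_i^{n,R,T}x=\sum_{j=0}^n(-1)^{n-j}(d_j^{n,L,\hat{T}}x^r)^r=(-1)^n(\partial_n^{L,\hat{T}}x^r)^r,
\]
where the last equality uses $(-1)^{-j}=(-1)^j$ and the linearity of the reversal $x\mapsto x^r$. Thus $\partial_n^{R,T}$ equals, up to the sign $(-1)^n$, the conjugate $r\circ\partial_n^{L,\hat{T}}\circ r$ of $\partial_n^{L,\hat{T}}$ by the reversal involution $r\colon x\mapsto x^r$.

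It remains to check that $r$ carries the degenerate subcomplex for $T$ to the one for $\hat{T}$. Here the right division of $\hat{T}$ is $\hat{\eL}$ and its left division is $\hat{\R}$, and from the defining identity of the hat one has $abb\hat{\eL}=bba\eL$ and $bba\hat{\R}=abb\R$. Therefore condition (D1) for $\hat{T}$ reads ``$a,b,bba\eL$ on three consecutive coordinates'' and condition (D2) for $\hat{T}$ reads ``$abb\R,b,a$ on three consecutive coordinates.'' Reversing a tuple then interchanges the two degeneracy types of $T$ with those of $\hat{T}$: a tuple satisfying (D1) for $T$ (containing $a,b,abb\R$) reverses to one satisfying (D2) for $\hat{T}$, and a tuple satisfying (D2) for $T$ (containing $bba\eL,b,a$) reverses to one satisfying (D1) for $\hat{T}$. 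Consequently $r$ restricts to an isomorphism $C_n^{D,T}\xrightarrow{\ \sim\ }C_n^{D,\hat{T}}$.

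Combining these, for $x\in C_n^{D,T}$ we have $x^r\in C_n^{D,\hat{T}}$, so $\partial_n^{L,\hat{T}}x^r\in C_{n-1}^{D,\hat{T}}$ by Theorem \ref{degL} for $\hat{T}$, and applying $r$ together with the harmless sign $(-1)^n$ returns an element of $C_{n-1}^{D,T}$; hence $\partial_n^R(C_n^D)\subset C_{n-1}^D$. I expect the only delicate point to be the bookkeeping in the previous paragraph --- correctly tracking which division of $T$ becomes the left and which the right division of $\hat{T}$, and verifying the hat-division identities $abb\hat{\eL}=bba\eL$ and $bba\hat{\R}=abb\R$ so that the two degeneracy conditions match up under reversal. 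Everything else is the formal reindexing already packaged in Lemma \ref{convert}.
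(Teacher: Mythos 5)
Your proposal is correct and follows essentially the same route as the paper: reduce to Theorem \ref{degL} via the reversal operator $r$ and the hat operation, using Lemmas \ref{revquasi}, \ref{rev1}, and \ref{convert}, after checking that $r$ swaps the (D1)/(D2) degeneracies for $T$ with the (D2)/(D1) degeneracies for $\hat{T}$. Your bookkeeping of the hat-divisions ($abb\hat{\eL}=bba\eL$, $bba\hat{\R}=abb\R$) matches the paper's computation exactly, and your explicit reindexing derivation of $\partial_n^{R,T}x=(-1)^n(\partial_n^{L,\hat{T}}x^r)^r$ is a correct spelling-out of what the paper cites from Lemma \ref{convert}.
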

\begin{proof}
Note that from Lemma \ref{revquasi} we have
\begin{align*}
& (\ldots,bbaT_1,b,a,\ldots)^r=(\ldots,bba\eL,b,a,\ldots)^r=(\ldots,a,b,bba\eL,\ldots)\\
& =(\ldots,a,b,abb\hat{\eL},\ldots)=(\ldots,a,b,abb\hat{T}_3,\ldots),
\end{align*}
and
\begin{align*}
& (\ldots,a,b,abbT_3,\ldots)^r=(\ldots,a,b,abb\R,\ldots)^r=(\ldots,abb\R,b,a,\ldots)\\
& =(\ldots,bba\hat{\R},b,a,\ldots)=(\ldots,bba\hat{T}_1,b,a,\ldots).
\end{align*}
In other words, degeneracies of type (D1) (resp. (D2)) for the operation $T$ are transformed into degeneracies of type (D2) (resp. (D1)) for the operation $\hat{T}$, and vice versa, by the operator $r$.

From Lemma \ref{convert} it follows that
\[ \partial_n^{R,T}x=(-1)^n(\partial_n^{L,\hat{T}}x^r)^r.\]
Theorem \ref{degR} now follows from Lemma \ref{rev1} and Theorem \ref{degL}.
\end{proof}

From the Theorems \ref{degL} and \ref{degR} follows

\begin{theorem}\label{degLR}
For a KTQ, 
\[\partial_n(C_n^D)\subset C_{n-1}^D.\]
\end{theorem}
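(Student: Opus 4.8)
The plan is to deduce the statement directly from the two preceding theorems by exploiting the splitting of the boundary map into its left and right parts. First I would dispose of the trivial range: for $n<1$ we have $C_n^D(X)=0$ by Definition \ref{degs}, so the inclusion holds vacuously, and it suffices to treat $n\geq 1$. In that range the boundary homomorphism is, by definition, $\partial_n=\partial_n^L-\partial_n^R$, so the whole problem reduces to controlling the two summands separately.

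Next I would invoke Theorem \ref{degL} and Theorem \ref{degR}, which assert precisely that $\partial_n^L(C_n^D)\subset C_{n-1}^D$ and $\partial_n^R(C_n^D)\subset C_{n-1}^D$ respectively (both under the KTQ hypothesis, which is exactly what we are assuming here). The one point that must be recorded is that $C_{n-1}^D(X)$ is a subgroup of $C_{n-1}(X)$: it is the free abelian group on the subset of generating tuples satisfying (D1) or (D2), hence closed under addition and, in particular, under subtraction. Consequently, for every generator $x$ of $C_n^D$ we have $\partial_n x=\partial_n^L x-\partial_n^R x\in C_{n-1}^D$, and by linear extension $\partial_n(C_n^D)\subset C_{n-1}^D$, as required.

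The key steps are therefore: (i) reduce to $n\geq 1$ and use $\partial_n=\partial_n^L-\partial_n^R$; (ii) apply the two degeneracy theorems to the summands; (iii) note the closure of $C_{n-1}^D$ under subtraction and pass to the linear span. I do not expect a genuine obstacle at this stage: all the real content lies in Theorem \ref{degL}, whose heart is the A3L computation showing that a (D1)-tuple is carried by $d_i^{n,L}$ to a (D2)-tuple, together with the sign cancellation between $d_j^{n,L}$ and $d_{j+1}^{n,L}$, and in Theorem \ref{degR}, which is itself reduced to Theorem \ref{degL} through the reversal operator and Lemmas \ref{revquasi}, \ref{rev1}, and \ref{convert}. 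The present theorem is a purely formal consequence; the only thing one must be careful about is that forming the difference $\partial_n^L-\partial_n^R$ does not take us outside the degenerate subcomplex, which is immediate from the subgroup property.
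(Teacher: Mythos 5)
Your proposal is correct and matches the paper exactly: the paper derives Theorem \ref{degLR} as an immediate consequence of Theorems \ref{degL} and \ref{degR} via the decomposition $\partial_n=\partial_n^L-\partial_n^R$, which is precisely your argument (with the subgroup-closure remark made explicit). All the substantive work indeed resides in the two cited theorems, as you note.
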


\begin{definition}
We proved that, for $X$ being a KTQ, $(C_n^D(X),\partial_n^L)$, $(C_n^D(X),\partial_n^R)$ and 
$(C_n^D(X),\partial_n)$ are chain subcomplexes of 
$(C_n(X),\partial_n^L)$, $(C_n(X),\partial_n^R)$, and 
$(C_n(X),\partial_n)$, respectively. We call their homology {\it left degenerate}, {\it right degenerate}, and {\it degenerate}, and denote it by $H^{LD}(X)$, $H^{RD}(X)$, and $H^D(X)$, respectively. We define quotient complexes
\begin{align*} 
& (C_n^{N}(X),\partial_n^L)=(C_n(X)/C_n^D(X),\partial_n^L),\\
& (C_n^{N}(X),\partial_n^R)=(C_n(X)/C_n^D(X),\partial_n^R),\\ 
& (C_n^{N}(X),\partial_n)=(C_n(X)/C_n^D(X),\partial_n),
\end{align*}
with induced differentials (and the same notation). We call the homology of these complexes {\it left normalized},
{\it right normalized}, and {\it normalized},
and denote it by $H^{LN}(X)$, $H^{RN}(X)$, and $H^N(X)$, respectively. We define the {\it knot-theoretic ternary quasigroup homology} as this last homology, $H^N(X)$.
\end{definition}

\subsection{Homology of IKTQs}
Recall from the first section, that an involutory knot-theoretic ternary quasigroup (IKTQ) is a ternary quasigroup $(X,T,\eL,T,\R)$ satisfying A3L and A3R. The condition $T=\M$
, equivalent to $a(axbT)bT=x$, for any $a$, $b$, $x\in X$, allows us to define another subcomplex.

\begin{definition}\label{degsI}
For an IKTQ $(X,T,\eL,T,\R)$, 
let $C_n^{I}(X)$ be the subgroup of $C_n(X)$ generated by the sums of the form
$x+x[x_j\mapsto x_{j-1}x_jx_{j+1}T]$, i.e.,
\[
(x_0,\ldots,x_{j-1},x_j,x_{j+1},\ldots,x_n)+
(x_0,\ldots,x_{j-1},x_{j-1}x_jx_{j+1}T,x_{j+1},\ldots,x_{n+1}),
\] 
for some $j\in\{1,\ldots,n\}$ and $n\geq 1$. For $n<1$, we take $C_n^{I}(X)=0$ . 
\end{definition}

\begin{theorem}\label{degIL}
For an IKTQ,
\[\partial_n^L(C_n^{I})\subset C_{n-1}^{I}.\]
\end{theorem}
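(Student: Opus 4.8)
The plan is to realize each face operator $d_i^{n,L}$ as a composition of elementary \emph{substitution} operators and to track how the generator-defining substitution interacts with them. For an interior index $k$, let $\sigma_k$ denote the operator on tuples that replaces the $k$-th entry $x_k$ by $x_{k-1}x_kx_{k+1}T$ and leaves all other entries fixed; thus a generator of $C_n^{I}$ is exactly $x+\sigma_j(x)$, and unfolding the inductive definition gives $d_i^{n,L}=d_0^{n,L}\circ\sigma_1\circ\cdots\circ\sigma_i$, where $d_0^{n,L}$ drops the $0$-th entry. I would first record the relations: (i) $\sigma_k^2=\mathrm{id}$, which is precisely the hypothesis $T=\M$, i.e. $a(axbT)bT=x$; (ii) $\sigma_k\sigma_\ell=\sigma_\ell\sigma_k$ whenever $|k-\ell|\ge 2$, since then neither operator rewrites an entry read by the other; and (iii) the braid relation $\sigma_k\sigma_{k+1}\sigma_k=\sigma_{k+1}\sigma_k\sigma_{k+1}$. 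I would also note the commutation $d_0^{n,L}\sigma_k=\sigma_k d_0^{n,L}$ for $k\ge 2$ (as $\sigma_k$ never touches position $0$).

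Granting these, fix a generator $g=x+x'$ with $x'=\sigma_j(x)$ and expand $\partial_n^L g=\sum_{i=0}^n(-1)^i\bigl(d_i^{n,L}x+d_i^{n,L}x'\bigr)$, splitting on $i$. For the two collision indices $i\in\{j-1,j\}$, the composition formula with (i) gives $d_{j-1}^{n,L}x'=d_0^{n,L}\sigma_1\cdots\sigma_{j-1}\sigma_j x=d_j^{n,L}x$ and $d_j^{n,L}x'=d_0^{n,L}\sigma_1\cdots\sigma_j\sigma_j x=d_{j-1}^{n,L}x$, so these four terms cancel in pairs by their alternating signs. For $i\le j-2$, relation (ii) pulls $\sigma_j$ leftward past $\sigma_1,\dots,\sigma_i$ and then past $d_0^{n,L}$, yielding $d_i^{n,L}x'=\sigma_j(d_i^{n,L}x)$, so $d_i^{n,L}x+d_i^{n,L}x'$ is again a $C_{n-1}^{I}$-generator. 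For $i\ge j+1$, I would simplify the word $\sigma_1\cdots\sigma_i\sigma_j$ by commuting $\sigma_j$ leftward to meet $\sigma_{j+1}$, applying the braid relation, and commuting the freed $\sigma_{j+1}$ to the front, obtaining $\sigma_1\cdots\sigma_i\sigma_j=\sigma_{j+1}\sigma_1\cdots\sigma_i$; hence $d_i^{n,L}x'=\sigma_{j+1}(d_i^{n,L}x)$ and $d_i^{n,L}x+d_i^{n,L}x'$ is a $C_{n-1}^{I}$-generator (now substituting at position $j+1$). The boundary cases $j=1$ (empty $i\le j-2$ range) and $j=n$ (empty $i\ge j+1$ range) run through the same computation, and $n=1$ cancels entirely; a bookkeeping check confirms the substitution indices always land in the interior, so every surviving term lies in $C_{n-1}^{I}$.

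The main obstacle is relation (iii), and this is exactly where the axioms enter. Comparing $\sigma_k\sigma_{k+1}\sigma_k$ and $\sigma_{k+1}\sigma_k\sigma_{k+1}$ coordinate by coordinate, the two entries that get rewritten reduce to $T(p,q,qrsT)=T(p,pqrT,(pqrT)rsT)$ and $(pqrT)rsT=[pq(qrsT)T](qrsT)sT$; the first is precisely A3R and the second is precisely A3L. Thus the braid relation is a direct consequence of the two defining axioms (used once each), which is natural since A3L and A3R were read off from the third Reidemeister move. Relations (i), (ii) and the $d_0^{n,L}$-commutation are immediate from the definitions, so once (iii) is secured the case analysis finishes the proof; the companion statement for $\partial_n^R$ then follows by an entirely parallel argument, or via the $\hat{T}$-duality of Lemmas \ref{revquasi} and \ref{rev1}.
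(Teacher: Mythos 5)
Your proposal is correct, and its skeleton coincides with the paper's: the same unfolding $d_i^{n,L}=d_0^{n,L}\circ\sigma_1\circ\cdots\circ\sigma_i$, the same three-way split on the face index $i$ relative to the substitution position $j$, the same cancellation of the $i\in\{j-1,j\}$ terms via $\sigma_j^2=\mathrm{id}$ (i.e.\ $T=\M$), and the same conclusion that the surviving terms are generators with substitution at position $j$ (for $i\le j-2$) or $j+1$ (for $i\ge j+1$). Where you genuinely diverge is in the hard case $i\ge j+1$: the paper proves $d_i^{n,L}(x[j])=(d_i^{n,L}x)[j+1]$ by a right-to-left induction on the coordinates $d_{i,k}^{n,L}$, invoking A3L and A3R inside a chained coordinate computation, whereas you isolate a single local identity --- the braid relation $\sigma_k\sigma_{k+1}\sigma_k=\sigma_{k+1}\sigma_k\sigma_{k+1}$ --- verify (correctly) that its two nontrivial coordinates are literally A3R and A3L, and then obtain the same identity by pure word rewriting ($\sigma_1\cdots\sigma_i\sigma_j=\sigma_{j+1}\sigma_1\cdots\sigma_i$ using distant commutations and one braid move). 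This buys modularity and a conceptual dividend the paper leaves implicit: the axioms A3L/A3R are \emph{exactly} the braid relation for the region-substitution operators, which is natural given their origin in the third Reidemeister move, and the same relations immediately handle any word in the $\sigma_k$'s (so, e.g., the analogous statement for $\partial_n^R$ could also be run directly rather than through the $\hat{T}$-duality). The paper's computation, by contrast, is self-contained in its established coordinate notation and does not require setting up the operator calculus. Your boundary-case bookkeeping ($j=1$, $j=n$, $n=1$, and the check that substitution indices stay interior) is also sound, so there is no gap.
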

\begin{proof}
Let $x[j]$ denote $(x_0,\ldots,x_{j-1},x_{j-1}x_jx_{j+1}T,x_{j+1},\ldots,x_{n+1})$.
Again we use the fact that $d_i^{n,L}(x_0,\ldots,x_{n+1})$ ends with the sequence
$x_{i+1},\ldots,x_{n+1}$. It follows that $d_i^{n,L}(x+x[j])$, for $i\in\{0,\ldots, j-2\}$, is in the required form.
Now let $i=j$:
\begin{align*}
&d_{j}^{n,L}x=d_{j}^{n,L}(x_0,\ldots,x_{j-1},x_j,x_{j+1},\ldots,x_{n+1})\\
& =d_{j-1}^{n,L}(x_0,\ldots,x_{j-1},x_{j-1}x_jx_{j+1}T,x_{j+1},\ldots,x_{n+1})\\
& =d_{j-1}^{n,L}(x[j]),\\
&d_{j}^{n,L}(x[j])=d_{j}^{n,L}(x_0,\ldots,x_{j-1},x_{j-1}x_jx_{j+1}T,x_{j+1},\ldots,x_{n+1})\\
& =d_{j-1}^{n,L}(x_0,\ldots,x_{j-1},x_{j-1}(x_{j-1}x_jx_{j+1}T)x_{j+1}T,x_{j+1},\ldots,x_{n+1})\\
& =d_{j-1}^{n,L}(x_0,\ldots,x_{j-1},x_j,x_{j+1},\ldots,x_{n+1})=
d_{j-1}^{n,L}x.
\end{align*}
Thus,
\[
d_{j}^{n,L}(x+x[j])=d_{j-1}^{n,L}(x+x[j]),
\]
but in $\partial_n^L$ they appear with opposite signs. Now let $j+1\leq i \leq n$. We will show that
\[
d_i^{n,L}(x+x[j])=d_i^{n,L}x+(d_i^{n,L}x)[j+1],
\]
that is, we need 
\begin{align*}
& d_i^{n,L}(x[j])=(d_{i,1}^{n,L}(x[j]),\ldots,d_{i,j}^{n,L}(x[j]),d_{i,j+1}^{n,L}(x[j]),d_{i,j+2}^{n,L}(x[j]),\ldots,d_{i,n+1}^{n,L}(x[j]))\\
& =(d_{i,1}^{n,L}x,\ldots,d_{i,j}^{n,L}x,(d_{i,j}^{n,L}x)(d_{i,j+1}^{n,L}x)(d_{i,j+2}^{n,L}x)T,d_{i,j+2}^{n,L}x,\ldots,d_{i,n+1}^{n,L}x)\\
& =(d_i^{n,L}x)[j+1].
\end{align*}
The main equalities between coordinates that we will show are
\[
d_{i,j+1}^{n,L}(x[j])=(d_{i,j}^{n,L}x)(d_{i,j+1}^{n,L}x)(d_{i,j+2}^{n,L}x)T
\]
and
\[
d_{i,j}^{n,L}(x[j])=d_{i,j}^{n,L}x.
\]
Since $x$ and $x[j]$ differ only on the $j$-th coordinate, from the inductive definition
\begin{equation*}
d_{i,k}^{n,L}x = \left\{
\begin{array}{rl}
x_{k-1}x_k(d_{i,k+1}^{n,L}x)T & \text{if } k\leq  i\\
x_k & \text{if } k > i
\end{array} \right.
\end{equation*}
follows the equality of the corresponding coordinates $j+2,\ldots,n+1$ in 
$d_i^{n,L}(x[j])$ and $(d_i^{n,L}x)[j+1]$.
For example:
\[
d_{i,j+2}^{n,L}(x[j])=(x[j])_{j+1}(x[j])_{j+2}(d_{i,j+3}^{n,L}(x[j]))T=
x_{j+1}x_{j+2}(d_{i,j+3}^{n,L}x)T=d_{i,j+2}^{n,L}x.
\]
Here $d_{i,j+3}^{n,L}(x[j])=d_{i,j+3}^{n,L}x$ is by induction from higher coordinates, since
\[
d_{i,n+1}^{n,L}(x[j])=d_{i,n+1}^{n,L}x=x_{n+1}.
\]
In the following calculations, we use the axioms A3L and A3R.
\begin{align*}
& d_{i,j+1}^{n,L}(x[j])=(x[j])_j(x[j])_{j+1}(d_{i,j+2}^{n,L}(x[j]))T\\
& =(x_{j-1}x_jx_{j+1}T)x_{j+1}(d_{i,j+2}^{n,L}x)T\\
& =\{x_{j-1}x_j[x_jx_{j+1}(d_{i,j+2}^{n,L}x)T]T\}[x_jx_{j+1}(d_{i,j+2}^{n,L}x)T](d_{i,j+2}^{n,L}x)T\\
& =(d_{i,j}^{n,L}x)(d_{i,j+1}^{n,L}x)(d_{i,j+2}^{n,L}x)T,\\
& d_{i,j}^{n,L}(x[j])=(x[j])_{j-1}(x[j])_{j}(d_{i,j+1}^{n,L}(x[j]))T\\
& =(x[j])_{j-1}(x[j])_{j}[(x[j])_{j}(x[j])_{j+1}(d_{i,j+2}^{n,L}(x[j]))T]T\\
& =x_{j-1}(x_{j-1}x_{j}x_{j+1}T)[(x_{j-1}x_{j}x_{j+1}T)x_{j+1}(d_{i,j+2}^{n,L}(x[j]))T]T\\
& =x_{j-1}x_j[x_{j}x_{j+1}(d_{i,j+2}^{n,L}x)T]T=
x_{j-1}x_j(d_{i,j+1}^{n,L}x)T=d_{i,j}^{n,L}x.
\end{align*}
The equality of the corresponding coordinates $1,\ldots,j-1$ in 
$d_i^{n,L}(x[j])$ and $(d_i^{n,L}x)[j+1]$ now follows from the already shown equalities, for example:
\[
d_{i,j-1}^{n,L}(x[j])=(x[j])_{j-2}(x[j])_{j-1}(d_{i,j}^{n,L}(x[j]))T
=x_{j-2}x_{j-1}(d_{i,j}^{n,L}x)T=d_{i,j-1}^{n,L}x.
\]
\end{proof}
\begin{theorem}\label{degIR}
For an IKTQ,
\[\partial_n^R(C_n^{I})\subset C_{n-1}^{I}.\]
\end{theorem}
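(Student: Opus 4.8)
The plan is to deduce Theorem \ref{degIR} from Theorem \ref{degIL} by the same duality argument that was used to pass from Theorem \ref{degL} to Theorem \ref{degR}, i.e. through the reversed operation $\hat T$ (with $xyz\hat T=zyxT$) and the tuple-reversal operator $r$. First I would check that being an IKTQ is preserved under passing to $\hat T$. By Lemma \ref{revquasi}, if $(X,T,\eL,\M,\R)$ is a ternary quasigroup then so is $(X,\hat T,\hat\R,\hat\M,\hat\eL)$; when $\M=T$ the middle division of this dual quasigroup is $\hat\M=\hat T$, so the dual again satisfies "primary operation equals middle division", that is, it is involutory. Combining this with Lemma \ref{rev1}, which interchanges A3L and A3R, and with the fact that an IKTQ satisfies \emph{both} A3L and A3R, I conclude that $(X,\hat T,\hat\R,\hat T,\hat\eL)$ is again an IKTQ, so Theorem \ref{degIL} is available for it.

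Next I would track how $r$ acts on the generators of $C_n^{I}$. Writing $x^r=(y_0,\ldots,y_{n+1})$ with $y_k=x_{n+1-k}$, the coordinate $x_j$ sits at position $n+1-j$ in $x^r$, with its two neighbors interchanged: $y_{n-j}=x_{j+1}$ and $y_{n+2-j}=x_{j-1}$. Since
\[
x_{j-1}x_jx_{j+1}T=x_{j+1}x_jx_{j-1}\hat T=y_{n-j}\,y_{n+1-j}\,y_{n+2-j}\,\hat T,
\]
the $T$-substitution at coordinate $j$ becomes exactly the $\hat T$-substitution at coordinate $n+1-j$ of $x^r$. Hence, denoting by $x[j]$ the tuple with $x_j$ replaced by $x_{j-1}x_jx_{j+1}T$, we get $(x[j])^r=x^r[n+1-j]$ computed with $\hat T$, and therefore
\[
(x+x[j])^r=x^r+x^r[n+1-j]
\]
is a generator of the IKTQ subcomplex built from $\hat T$. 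Because $r$ is an involution and $\hat{\hat T}=T$, this sets up a bijection between the generators for $T$ and those for $\hat T$, so $r$ carries $C_n^{I}$ (for $T$) onto $C_n^{I}$ (for $\hat T$), and likewise in degree $n-1$.

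Finally I would combine these observations with Lemma \ref{convert} in the form $\partial_n^{R,T}x=(-1)^n(\partial_n^{L,\hat T}x^r)^r$, the identity already derived in the proof of Theorem \ref{degR}. Given $z\in C_n^{I}$, its reversal $z^r$ lies in the IKTQ subcomplex for $\hat T$; applying Theorem \ref{degIL} to the IKTQ $\hat T$ places $\partial_n^{L,\hat T}z^r$ in $C_{n-1}^{I}$ for $\hat T$; applying $r$ once more returns it to $C_{n-1}^{I}$ for $T$, so $\partial_n^{R}z\in C_{n-1}^{I}$. The only delicate point I anticipate is the index bookkeeping in the middle step—verifying that reversing the tuple sends the single substituted coordinate $j$ together with its neighbors to the substituted coordinate $n+1-j$ for $\hat T$—but this is a direct check, and in fact simpler than the (D1)/(D2) interchange displayed in the proof of Theorem \ref{degR}, since the IKTQ generators form a single symmetric type rather than two dual families.
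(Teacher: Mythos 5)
Your proof is correct and follows essentially the same route as the paper: reverse tuples, pass to $\hat{T}$, use the identity $\partial_n^{R,T}x=(-1)^n(\partial_n^{L,\hat{T}}x^r)^r$ coming from Lemma \ref{convert}, and invoke Theorem \ref{degIL} for the dual structure via Lemmas \ref{revquasi} and \ref{rev1}. Your bookkeeping is in fact slightly more careful than the paper's: the index $n+1-j$ you compute for the substituted coordinate in $x^r$ is the correct one (the paper writes $(x^r)[n-j]_{\hat{T}}$, an off-by-one slip, as one sees already for $n=1$, $j=1$), and your explicit verification that $(X,\hat{T},\hat{\R},\hat{T},\hat{\eL})$ is again an IKTQ is needed to apply Theorem \ref{degIL} but is left implicit in the paper.
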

\begin{proof}
We will add the symbol of the used operation where needed.
Let $x+x[j]\in C_n^{I,T}(X).$ Then
\[(x+x[j])^r=x^r+(x^r)[n-j]_{\hat{T}},\]
that is, the reversing operator $r$ sends a chain degenerate with respect to $T$ to a chain degenerate with respect to $\hat{T}$, and vice versa. Now the equality
\[ \partial_n^{R,T}x=(-1)^n(\partial_n^{L,\hat{T}}x^r)^r,\]
together with Lemma \ref{rev1} and Theorem \ref{degIL}, finishes the proof.
\end{proof}

From Theorems \ref{degIL} and \ref{degIR} follows 
\begin{theorem}\label{degILR}
For an IKTQ,
\[\partial_n(C_n^{I})\subset C_{n-1}^{I}.\]
\end{theorem}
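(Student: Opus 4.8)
The plan is to derive Theorem \ref{degILR} directly from the two preceding theorems, using nothing more than the splitting of the full differential and the fact that the target is a subgroup. First I would dispose of the range $n<1$, where $C_n^{I}(X)=0$ by Definition \ref{degsI}, so that the inclusion holds vacuously. For the remaining case $n\geq 1$, I would recall that the differential decomposes as $\partial_n=\partial_n^L-\partial_n^R$, which is precisely the definition given for $n>0$.

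The substance, such as it is, lies entirely in verifying the inclusion on generators and then invoking linearity. I would take an arbitrary generator $g=x+x[j]$ of $C_n^{I}$, as in Definition \ref{degsI}. By Theorem \ref{degIL} we have $\partial_n^L(g)\in C_{n-1}^{I}$, and by Theorem \ref{degIR} we have $\partial_n^R(g)\in C_{n-1}^{I}$. Since $C_{n-1}^{I}$ is, by construction, a subgroup of $C_{n-1}(X)$, it is closed under subtraction, and therefore
\[
\partial_n(g)=\partial_n^L(g)-\partial_n^R(g)\in C_{n-1}^{I}.
\]
As $\partial_n$ is a homomorphism of abelian groups and $C_n^{I}$ is generated by such $g$, the image $\partial_n(C_n^{I})$ lands in $C_{n-1}^{I}$, which is the claim.

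There is no genuine obstacle at this final step: all the work has already been carried out in establishing the two component inclusions $\partial_n^L(C_n^{I})\subset C_{n-1}^{I}$ and $\partial_n^R(C_n^{I})\subset C_{n-1}^{I}$, the second of which was itself reduced to the first via the reversing operator $r$ and Lemma \ref{rev1}. The only point demanding any care is bookkeeping rather than mathematics, namely that $C_n^{I}$ is defined as the subgroup \emph{generated} by the sums $x+x[j]$, so one must check the inclusion on these generators and then extend by the homomorphism property, relying on closure of the target subgroup under the group operation.
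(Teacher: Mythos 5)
Your proof is correct and matches the paper's argument exactly: the paper deduces Theorem \ref{degILR} directly from Theorems \ref{degIL} and \ref{degIR} via the decomposition $\partial_n=\partial_n^L-\partial_n^R$, just as you do. Your additional bookkeeping (checking generators, closure of the subgroup under subtraction, the trivial case $n<1$) only makes explicit what the paper leaves implicit.
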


\begin{definition}\label{degsID}
For an IKTQ $(X,T,\eL,T,\R)$, and $n\geq 1$,
let $C_n^{ID}(X)$ be the subgroup of $C_n(X)$ generated by the sums of the form
$x+x[j]$
for some $j\in\{1,\ldots,n\}$ and an $(n+2)$-tuple $x\in C_n(X)$, and by the $(n+2)$-tuples $y\in C_n(X)$ satisfying at least one of the conditions $D1$ and $D2$ from Definition \ref{degs}.
For $n<1$, we set $C_n^{ID}(X)=0$.
\end{definition}

From Theorems \ref{degLR} and \ref{degILR} follows 
\begin{theorem}\label{degIDLR}
For an IKTQ,
\[\partial_n(C_n^{ID})\subset C_{n-1}^{ID}.\]
\end{theorem}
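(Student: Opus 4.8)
The plan is to deduce this directly from the two subcomplex results already established, namely Theorem \ref{degLR} and Theorem \ref{degILR}, together with additivity of the boundary map. First I would record the structural observation that, by Definition \ref{degsID}, the group $C_n^{ID}(X)$ is exactly the internal sum of subgroups $C_n^{I}(X)+C_n^D(X)$ inside $C_n(X)$: the generators listed in Definition \ref{degsID} are precisely the generators $x+x[j]$ of $C_n^{I}(X)$ from Definition \ref{degsI}, together with the $(n+2)$-tuples satisfying (D1) or (D2), which are the generators of $C_n^D(X)$ from Definition \ref{degs}. Consequently every element of $C_n^{ID}(X)$ can be written as $u+v$ with $u\in C_n^{I}(X)$ and $v\in C_n^D(X)$.

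Next I would note that an IKTQ is in particular a KTQ, being a ternary quasigroup $(X,T,\eL,T,\R)$ satisfying A3L and A3R with the additional condition $T=\M$; hence both prior theorems are applicable. Theorem \ref{degILR} gives $\partial_n(C_n^{I})\subset C_{n-1}^{I}$, while Theorem \ref{degLR} gives $\partial_n(C_n^D)\subset C_{n-1}^D$. Since $\partial_n$ is a homomorphism of abelian groups, for $z=u+v$ as above I would simply compute
\[
\partial_n(z)=\partial_n(u)+\partial_n(v)\in C_{n-1}^{I}(X)+C_{n-1}^D(X)=C_{n-1}^{ID}(X),
\]
which is exactly the desired conclusion.

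The only point that warrants any care is the first step: confirming that the generating set displayed in Definition \ref{degsID} really is the union of the two earlier generating sets, so that the identity $C_n^{ID}=C_n^{I}+C_n^D$ holds at the level of subgroups rather than merely as a one-sided inclusion. Once that identification is in hand there is no genuine obstacle, since the statement becomes a formal consequence of linearity of $\partial_n$ applied to a sum of two $\partial_n$-invariant subgroups. In particular, I expect that no new quasigroup identity and no fresh instance of A3L or A3R needs to be verified at this stage, because all such computations have already been carried out inside the proofs of Theorems \ref{degLR} and \ref{degILR}.
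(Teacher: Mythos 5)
Your proposal is correct and follows exactly the paper's own route: the paper derives Theorem \ref{degIDLR} directly from Theorems \ref{degLR} and \ref{degILR}, which is precisely your argument, with the paper leaving implicit the decomposition $C_n^{ID}=C_n^{I}+C_n^{D}$ and the linearity of $\partial_n$ that you spell out. Your explicit verification that the generating set in Definition \ref{degsID} is the union of those in Definitions \ref{degs} and \ref{degsI}, and that an IKTQ is in particular a KTQ, fills in the (routine) details the paper omits.
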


\begin{definition}
We proved that, for an IKTQ $X$, $(C_n^I(X),\partial_n^L)$, $(C_n^I(X),\partial_n^R)$,
$(C_n^I(X),\partial_n)$,
$(C_n^{ID}(X),\partial_n^L)$, $(C_n^{ID}(X),\partial_n^R)$, and 
$(C_n^{ID}(X),\partial_n)$
are chain subcomplexes of 
$(C_n(X),\partial_n^L)$, $(C_n(X),\partial_n^R)$,
$(C_n(X),\partial_n)$,
$(C_n(X),\partial_n^L)$, $(C_n(X),\partial_n^R)$, and 
$(C_n(X),\partial_n)$, respectively. We denote their homology by $H^{LI}(X)$, $H^{RI}(X)$, $H^I(X)$, $H^{LID}(X)$, $H^{RID}(X)$, and $H^{ID}(X)$,
respectively. We define quotient complexes
\begin{align*} 
& (C_n^{NI}(X),\partial_n^L)=(C_n(X)/C_n^I(X),\partial_n^L),\\
& (C_n^{NI}(X),\partial_n^R)=(C_n(X)/C_n^I(X),\partial_n^R),\\ 
& (C_n^{NI}(X),\partial_n)=(C_n(X)/C_n^I(X),\partial_n),\\
& (C_n^{NID}(X),\partial_n^L)=(C_n(X)/C_n^{ID}(X),\partial_n^L),\\
& (C_n^{NID}(X),\partial_n^R)=(C_n(X)/C_n^{ID}(X),\partial_n^R),\\ 
& (C_n^{NID}(X),\partial_n)=(C_n(X)/C_n^{ID}(X),\partial_n),
\end{align*}
with induced differentials (and the same notation). We denote the homology of these complexes by $H^{LNI}(X)$, $H^{RNI}(X)$, $H^{NI}(X)$,
$H^{LNID}(X)$, $H^{RNID}(X)$, and $H^{NID}(X)$, respectively.
We define homology with coefficients other than $\Z$, and cohomology, in a standard way.
\end{definition}

\section{Geometric interpretation}

Let $D$ be a link diagram (or a flat link diagram) on a compact oriented surface $F$, or on a plane, colored by elements of KTQ (or IKTQ). Then we can assign to it a cycle with respect to the differential $\partial$ in one of the homology theories that we defined in this paper. The kind of homology that we choose depends on what kind of invariants (that is, invariants under which moves) we want to obtain. 

\begin{figure}
\begin{center}
\includegraphics[height=3.5 cm]{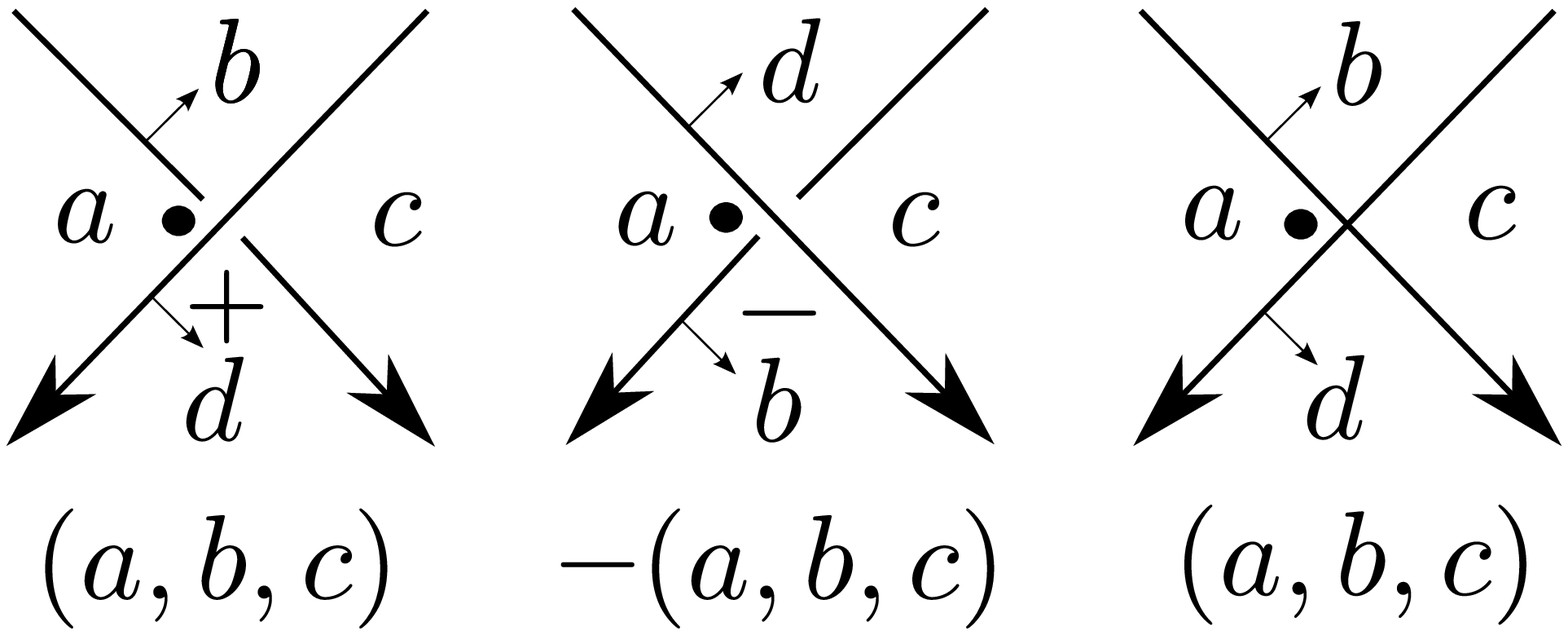}
\caption{}\label{chains}
\end{center}
\end{figure}

\begin{definition}
Figure \ref{chains} shows the way of assigning a signed chain (a signed triple) to a colored classical positive crossing, a classical negative crossing, and a flat crossing. The chain assigned to the entire diagram is the sum of such signed expressions taken over all classical (or flat) crossings. We will call such a chain for a diagram $D$ on a compact oriented surface $F$ (or a plane)  an {\it associated chain} $c_D$.
\end{definition}

\begin{figure}
\begin{center}
\includegraphics[height=6.5 cm]{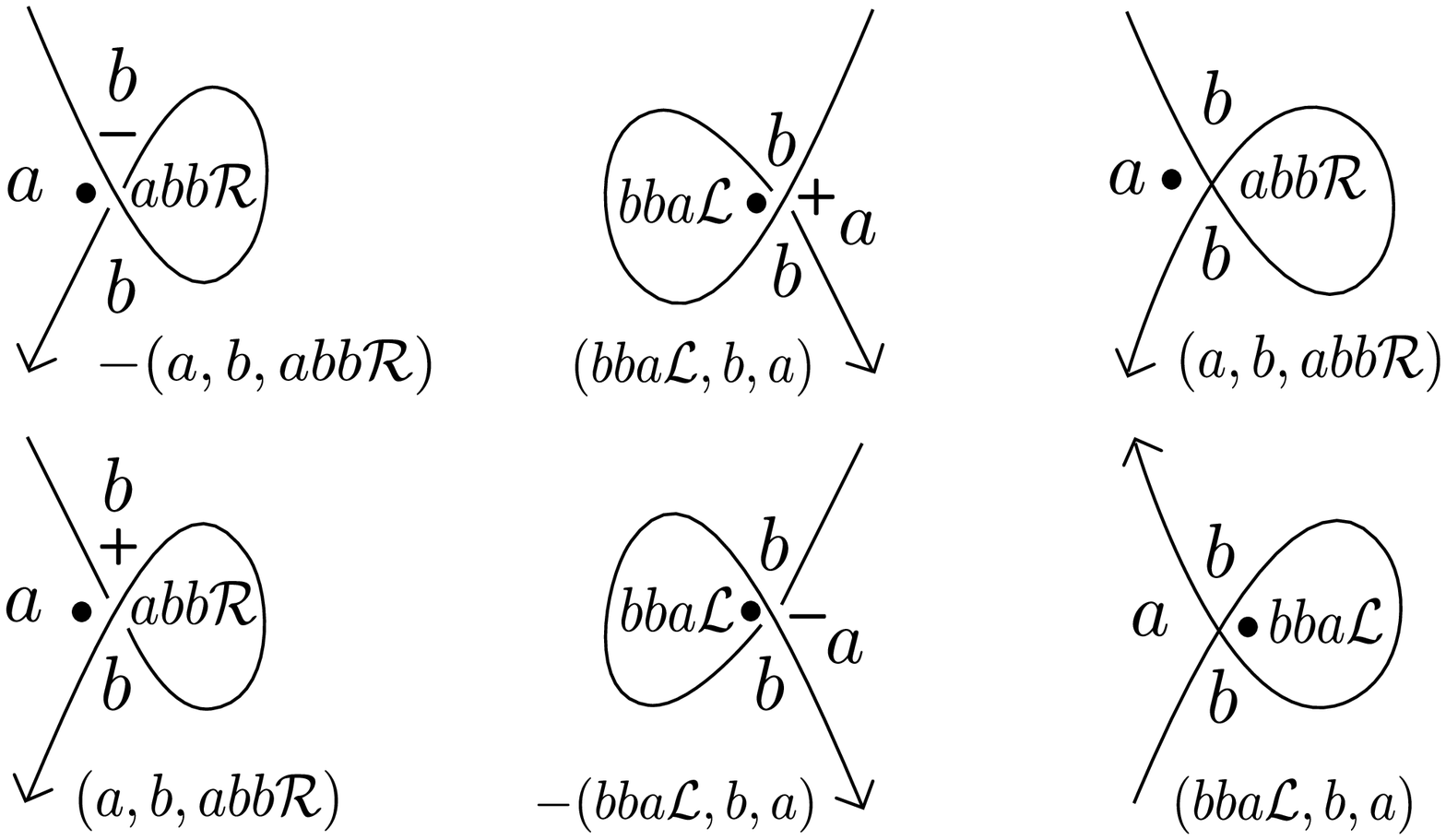}
\caption{}\label{degR1}
\end{center}
\end{figure}

\begin{figure}
\begin{center}
\includegraphics[height=6 cm]{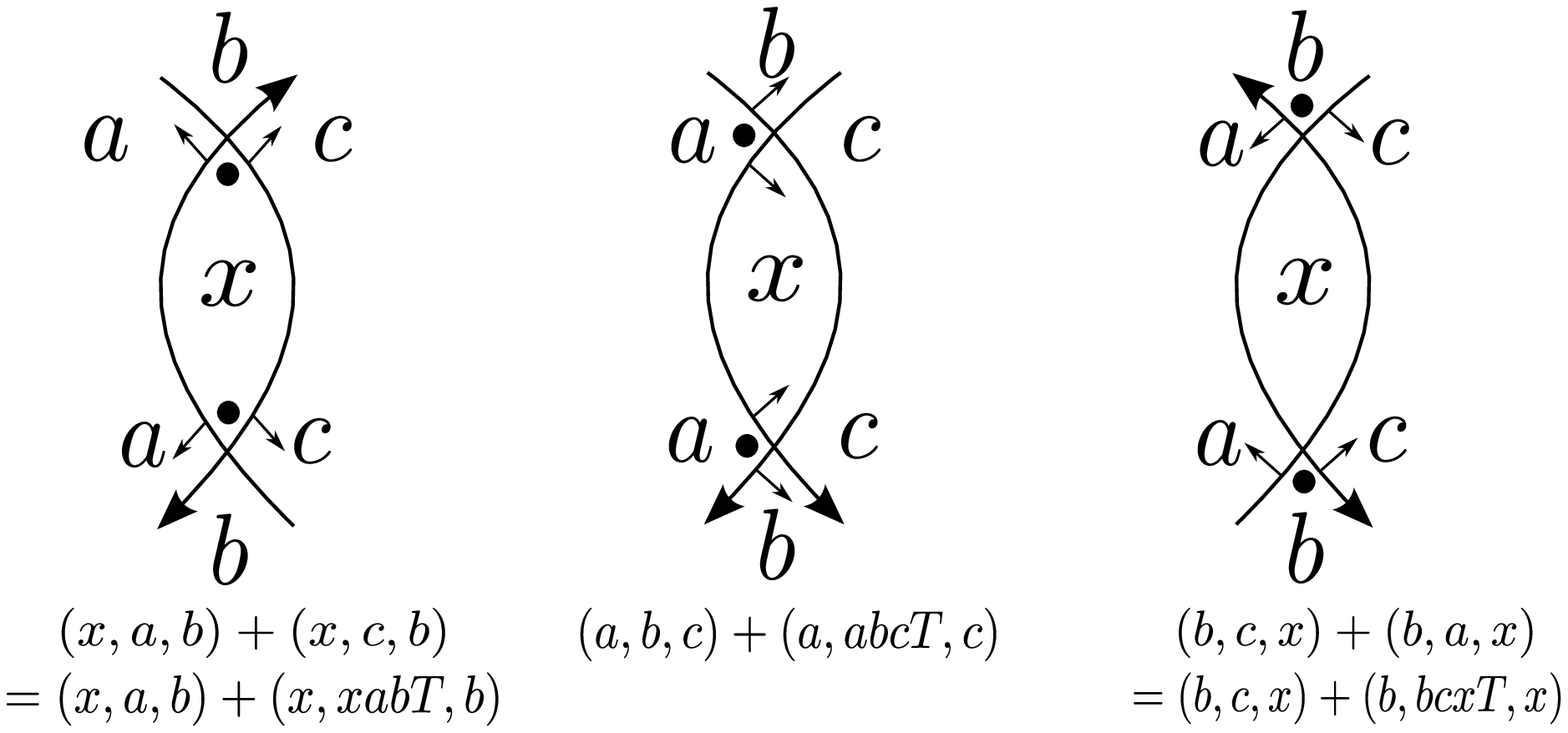}
\caption{}\label{fR2deg}
\end{center}
\end{figure}

From the Figures \ref{degR1} and \ref{fR2deg}, we see that the subcomplex 
$C_n^D$ corresponds to the first Reidemeister move (and its flat version),
the subcomplex $C_n^I$ corresponds to the second flat Reidemeister move, and 
the subcomplex $C_n^{ID}$ is related to the first and second flat Reidemeister moves. 

\begin{figure}
\begin{center}
\includegraphics[height=4.5 cm]{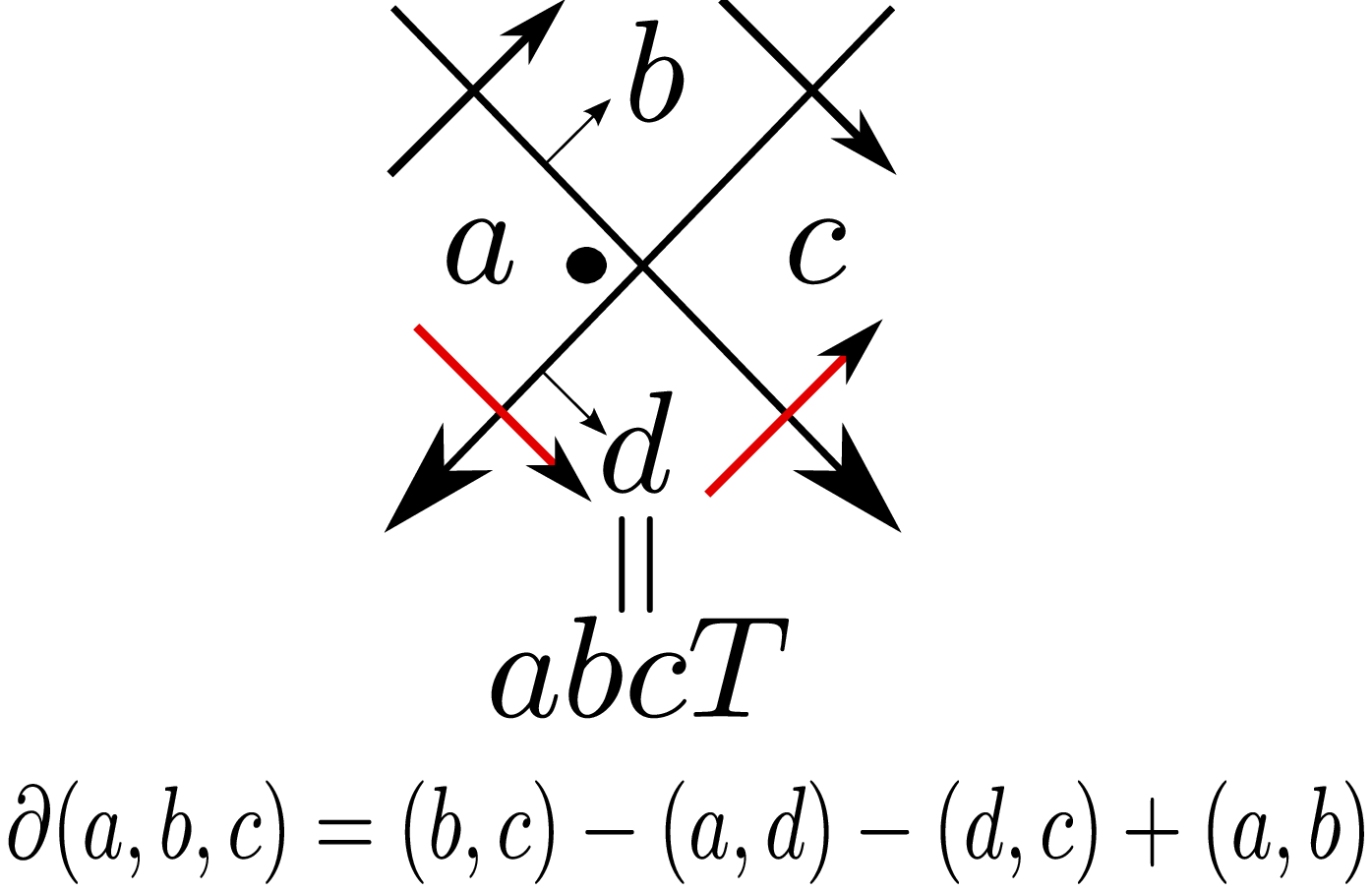}
\caption{}\label{flatdiff}
\end{center}
\end{figure}

\begin{figure}
\begin{center}
\includegraphics[height=3.5 cm]{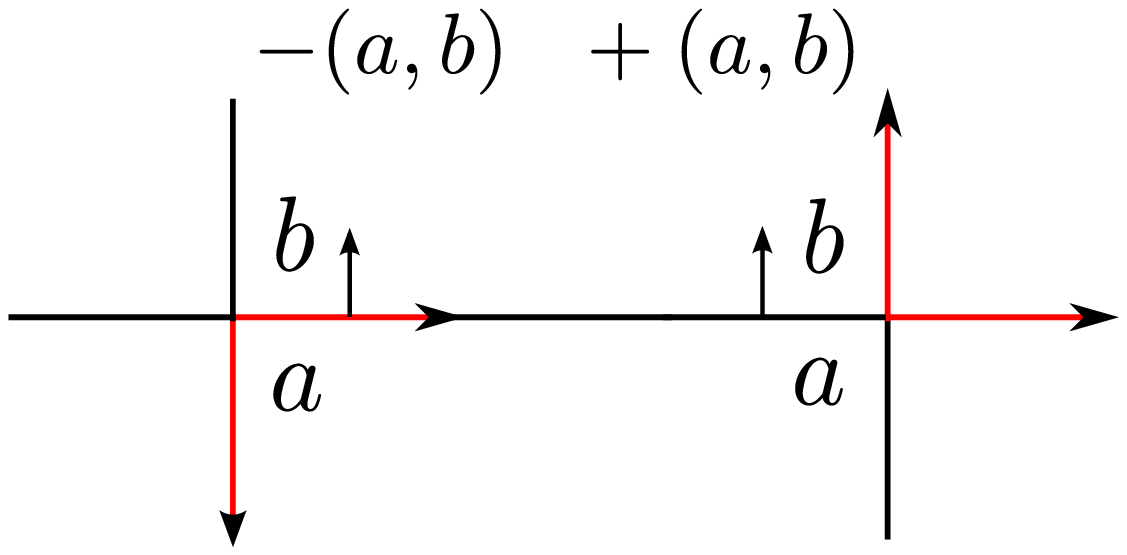}
\caption{}\label{fcycle}
\end{center}
\end{figure}

\begin{lemma} For an IKTQ-colored flat oriented link diagram $D$ on a compact oriented surface $F$, its associated chain $c_D$ is a cycle in IKTQ homology of the IKTQ used to color the diagram.
\end{lemma}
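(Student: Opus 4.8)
The goal is to show $\partial_1 c_D = 0$. Since $C_0^I(X)=C_0^{ID}(X)=0$, being a cycle in the normalized IKTQ complex is the same as the exact equality $\partial_1 c_D=0$ in $C_0(X)=\Z\langle X^2\rangle$, so it suffices to prove the latter. The plan is to reduce this global vanishing to a purely local cancellation along the arcs of the diagram, in the spirit of Figures \ref{flatdiff} and \ref{fcycle}.

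First I would write $c_D=\sum_v \varepsilon_v\tau_v$, summed over the flat crossings $v$ of $D$, where $\tau_v\in X^3$ is the triple read off at $v$ by the convention of Figure \ref{chains} and $\varepsilon_v$ is its sign; for flat crossings this sign is uniform, which will be essential below. Then $\partial_1 c_D=\sum_v\varepsilon_v\,\partial_1\tau_v$. Using $\partial_1(a,b,c)=(b,c)-(a,abcT)-(abcT,c)+(a,b)$, I would observe that the four $2$-tuples in $\partial_1\tau_v$ are indexed by the four arc-germs meeting at $v$: each records the colors of the two regions separated by one such arc. Writing the four regions around $v$ in cyclic order as $a,b,c,d$ with $d=abcT$ the color of the corner adjacent to the two outgoing edges (the convention of Fig. \ref{combquasi}), the two arcs bounding the outgoing corner $d$ occur with sign $-$, and the two arcs bounding the opposite, incoming corner $b$ occur with sign $+$.

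Next I would reorganize $\partial_1 c_D$ as a sum over the arcs $e$ of $D$ rather than over crossings. Every arc of a flat link diagram on $F$ has two endpoints, and along its orientation it leaves one crossing, where it is an outgoing edge contributing with sign $-$, and enters another, where it is an incoming edge contributing with sign $+$. The two regions flanking $e$ have globally well-defined colors, so each endpoint contributes the same unordered pair; the uniformity of $\varepsilon_v$ guarantees the two contributions differ exactly by the sign $\mp$. The involutory condition $T=\M$, equivalent to $a(axbT)bT=x$, together with the two-coordinate commutativities $cab\eL=acb\eL$ and $bca\R=bac\R$, is what I would use to check that the \emph{ordered} pair recorded at the two ends agrees rather than being reversed. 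Hence the two endpoint contributions of each arc cancel, and summing over all arcs yields $\partial_1 c_D=0$.

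I expect the main obstacle to be precisely the sign-and-ordering bookkeeping of this last step: verifying, for every arc, that the ordered $2$-tuple produced at its tail as an outgoing edge coincides with the one produced at its head as an incoming edge. This requires a local case analysis over the relative orientations of the two strands at each crossing and over which of the side corners ($a$ and $c$) the arc borders, and it is here that $T=\M$ and the division relations of the IKTQ are invoked to identify the colors consistently, since a flat crossing has no preferred strand. Once this local cancellation is established, the global vanishing is immediate, and because all degenerate subgroups vanish in degree $0$, $c_D$ descends to a cycle in each of the normalized IKTQ homologies.
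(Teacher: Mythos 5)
Your argument is correct and is essentially the paper's own proof: the paper likewise observes that in $\partial_1(a,b,c)=(b,c)-(a,abcT)-(abcT,c)+(a,b)$ the negative pairs flank the two outgoing edges and the positive pairs the two incoming ones, so each edge of the diagram, being outgoing at one crossing and incoming at the next, contributes the same pair of region colors twice with opposite signs, whence $\partial_1 c_D=0$ (and, as you note, this exact vanishing is the same as being a cycle in the normalized complexes since $C_0^I=C_0^{ID}=0$). One small correction to your anticipated "main obstacle": the agreement of the \emph{ordered} pairs at the two ends of an edge is forced purely by the co-orientation convention (the pair is read as left/right region relative to the oriented edge on the oriented surface, which is well defined globally along the edge), not by $T=\M$ or the division identities --- those axioms are what make the coloring rule and its invariance under the second flat Reidemeister move work, and they play no role in the cycle condition itself.
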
 
\begin{proof}
At every flat crossing, there are two incoming and two outgoing edges. In the differential of a triple of colors assigned to a flat crossing, the pairs of colors surrounding outgoing edges get a negative sign, and the other ones are positive, see Fig. \ref{flatdiff}.
Thus, an edge which is outgoing for one flat crossing, at its next flat crossing is incoming, and yields a positive pair of the same colors, see Fig. \ref{fcycle}. Therefore, $\partial(c_D)=0$, i.e., the entire colored diagram represents a cycle. 
\end{proof}

\begin{figure}
\begin{center}
\includegraphics[height=6 cm]{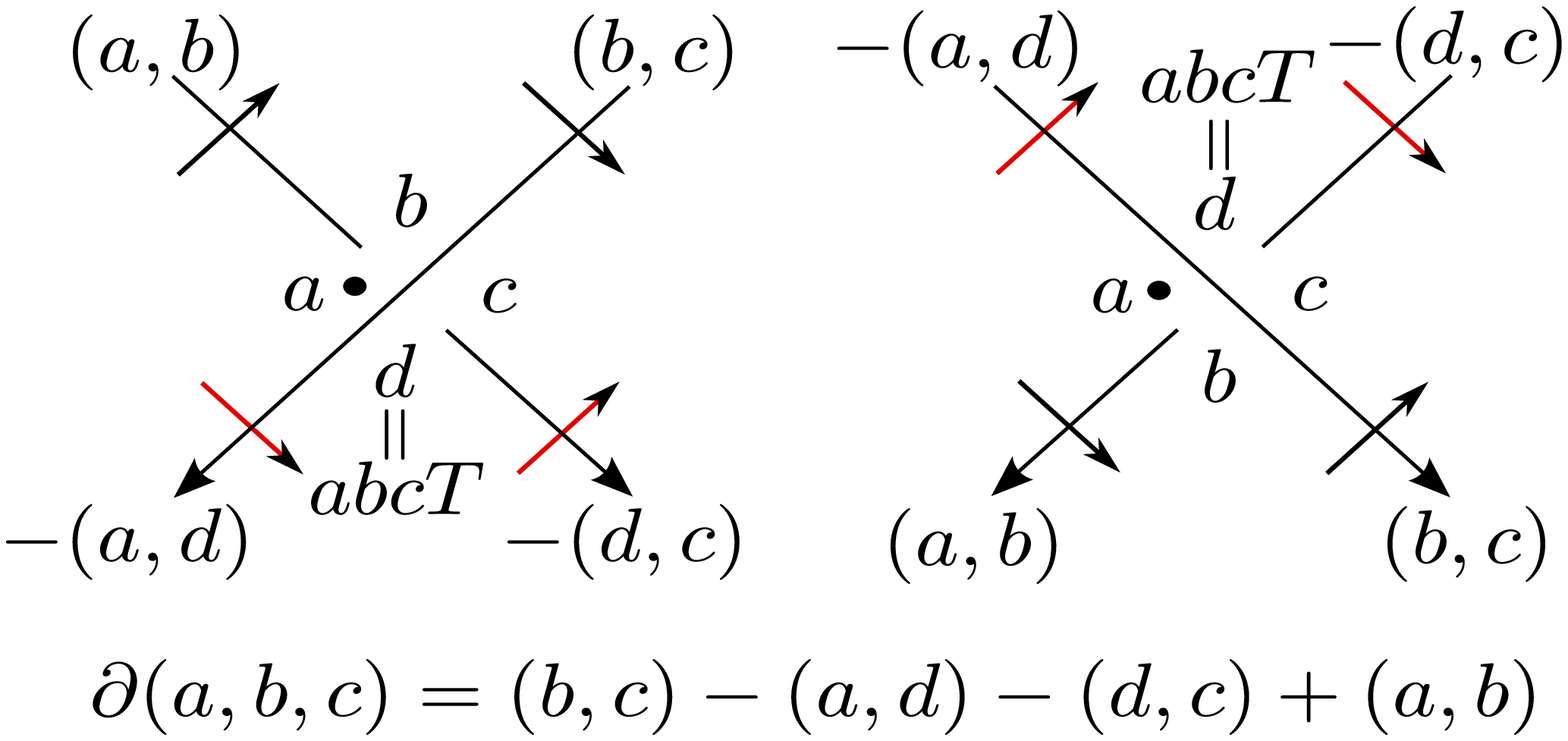}
\caption{}\label{quasidiff}
\end{center}
\end{figure}

\begin{lemma} For an KTQ-colored oriented link diagram $D$ on a compact oriented surface $F$ (or a plane), its associated chain $c_D$ is a cycle in KTQ homology of the KTQ used to color the diagram.
\end{lemma}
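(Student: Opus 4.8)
The plan is to mimic the proof of the preceding lemma, replacing the uniform flat crossings by genuine positive and negative crossings and carefully tracking the signs prescribed by Figure \ref{chains}. I would write $c_D=\sum_v \varepsilon_v\,t_v$, where the sum runs over all crossings $v$ of $D$, the triple $t_v$ is read off from the coloring of $v$ as in Figure \ref{chains}, and $\varepsilon_v\in\{+1,-1\}$ is the sign that figure attaches to a crossing of the type of $v$. Since $\partial$ is linear, $\partial(c_D)=\sum_v \varepsilon_v\,\partial_1(t_v)$, so it suffices to understand a single summand and then collect the resulting terms edge by edge.

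First I would expand $\partial_1$ on the triple at one crossing using $\partial_1(a,b,c)=(b,c)-(a,abcT)-(abcT,c)+(a,b)$, and identify its four terms with the four edges incident to $v$, as displayed in Figure \ref{quasidiff}. Exactly as in the flat case (Fig. \ref{flatdiff}), each resulting ordered pair records the colors of the two regions flanking one of the edges at $v$, and, once the overall sign $\varepsilon_v$ is incorporated, the edges abutting the outgoing arcs acquire one sign while those abutting the incoming arcs acquire the opposite sign. The purpose of assigning opposite overall signs $\varepsilon_v$ to positive and negative crossings in Figure \ref{chains} is precisely that, although the primary operation $T$ colors the corner adjacent to the outgoing edges at a positive crossing and the corner adjacent to the incoming edges at a negative one, the resulting local rule that an outgoing edge contributes a negative pair and an incoming edge a positive pair becomes uniform across both crossing types.

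Next I would assemble the global cancellation. Every edge $e$ appearing in $\partial(c_D)$ is an arc of an oriented strand running between two crossings, hence it is outgoing at one endpoint and incoming at the other; moreover the two regions flanking $e$ are the same at both of its endpoints, because each region is a connected component of the complement and so its color is constant along $e$. Consequently the pair that $e$ contributes at its outgoing endpoint and the pair it contributes at its incoming endpoint are the same ordered pair of colors with opposite signs, and they cancel (compare Fig. \ref{fcycle}). Summing over all edges gives $\partial(c_D)=0$. The argument is insensitive to whether $D$ lies on the surface $F$ or on the plane; in the planar case the unbounded region is colored as well, and nothing changes.

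The step I expect to be the main obstacle is the sign bookkeeping of the previous paragraph: one must verify, from the conventions of Figures \ref{chains} and \ref{quasidiff}, that the same ordered pair attached to a given edge at its two endpoints really occurs with opposite signs in every combination of endpoint types, namely $(+,+)$, $(+,-)$, $(-,+)$ and $(-,-)$, and in particular that the interchange of the roles of the incoming and outgoing corners at a negative crossing is exactly compensated by the sign $\varepsilon_v$. Once this local check is carried out for one positive and one negative crossing, the topological cancellation is identical to that of the preceding lemma and the proof concludes.
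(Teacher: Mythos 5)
Your proposal is correct and follows essentially the same route as the paper's proof: expanding the differential at each crossing, observing that the overall sign attached to negative crossings (Fig.~\ref{chains}) makes the rule ``incoming edges give positive pairs, outgoing edges give negative pairs'' uniform across both crossing types, and then cancelling edge by edge since every edge is outgoing at one endpoint and incoming at the other, with the ordered pair of flanking colors (fixed by the co-orientation) identical at both ends. The paper's proof is just a more condensed version of the same argument.
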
 
\begin{proof}
We see from the Figure \ref{quasidiff}, that in the differential 
\begin{align*}
\partial(a,b,c)&= (b,c)-(a,abcT)-(abcT,c)+(a,b)\\
&=(b,c)-(a,d)-(d,c)+(a,b)
\end{align*}
of a triple of colors assigned to a positive crossing, the positive pairs of colors correspond to the incoming edges, and the negative pairs can be assigned to the outgoing edges. For a negative crossing, the chain assigned to it is $-(a,b,c)$. Thus,
in $\partial[-(a,b,c)]=-(b,c)+(a,d)+(d,c)-(a,b)$, again the positive pairs correspond to the incoming edges, and the negative pairs to the outgoing edges. Also, the order of colors in each pair is pointed by the co-orientation of a diagram.
This ensures that the two ends of an edge give the same pair of colors, but with opposite signs. Thus, $\partial(c_D)=0$. 
\end{proof}

\begin{lemma}\label{flathominv}
The IKTQ homology class of a cycle assigned to an oriented IKTQ colored flat link diagram on a compact oriented surface $F$ is an invariant under all flat Reidemeister moves if we use the homology $H^{NID}(X)$, and under the second and third flat Reidemeister moves, if we use $H^{NI}(X)$.
\end{lemma}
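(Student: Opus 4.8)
The plan is to compute, for each flat Reidemeister move separately, the difference $c_{D'}-c_D$ between the associated chains of the two diagrams, and to show that this difference either lies in one of the subcomplexes we have constructed or is a boundary. Since $C_n^I\subset C_n^{ID}$ and $C_n^D\subset C_n^{ID}$, and since a boundary is killed in the homology of every one of these complexes, three local computations---one per move---will yield both assertions of the lemma simultaneously.

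The algebraic core is the third flat Reidemeister move. Labelling the four regions that meet the triangle of the move by a $4$-tuple $w=(x_0,x_1,x_2,x_3)$ (in the order prescribed by Fig.~\ref{fquasi}), I claim that $c_{D'}-c_D=\pm\partial_2(w)$. Indeed, the six summands of the explicit formula for $\partial_2$ are precisely the six triples assigned by Fig.~\ref{chains} to the three crossings before and the three crossings after the move, and the internal substitutions $c\mapsto bcdT$ and $b\mapsto abcT$ built into $\partial_2$ (which are exactly the statements A3L and A3R read off from Fig.~\ref{fR3}) reproduce the region colors that genuinely appear after the move. The main obstacle is carrying out this term-by-term identification: one must match each of the six crossing-triples to a summand of $\partial_2(w)$ with the correct sign coming from the co-orientation convention of Fig.~\ref{chains}. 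Once this is verified, $c_{D'}$ and $c_D$ are homologous in each of $H^{NI}(X)$ and $H^{NID}(X)$, since $\partial_2(w)$ is a boundary regardless of the quotient taken.

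For the second flat Reidemeister move I would use the coloring of Fig.~\ref{fquasi} together with Fig.~\ref{fR2deg}: the two new crossings contribute a pair of triples, and I claim their sum is a generator of $C_1^I(X)$, i.e. of the form $x+x[\,x_1\mapsto x_0x_1x_2T\,]$ as in Definition~\ref{degsI}. The point is that the involutory condition $T=\M$, equivalent to $a(axbT)bT=x$, is exactly what forces one of the two triples to be the image of the other under the substitution $x_1\mapsto x_0x_1x_2T$. Hence $c_{D'}-c_D\in C_1^I\subset C_1^{ID}$, and the class is unchanged in both $H^{NI}(X)$ and $H^{NID}(X)$. For the first flat Reidemeister move, the single kink-crossing contributes one triple which, by the combinatorial coloring of the kink shown on the right of Fig.~\ref{degR1}, has the form $(a,b,abb\R)$ or $(bba\eL,b,a)$; this is a generator of $C_1^D(X)$ satisfying (D1) or (D2) of Definition~\ref{degs}, so $c_{D'}-c_D\in C_1^D\subset C_1^{ID}$.

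It remains to assemble the three computations. Passing to $H^{NID}(X)$ we quotient by $C^{ID}$, which contains both $C^I$ and $C^D$; thus the first- and second-move differences vanish and the third-move difference is a boundary, giving invariance under all flat Reidemeister moves. Passing to $H^{NI}(X)$ we quotient only by $C^I$; the second-move difference still vanishes and the third-move difference is still a boundary, so the class is invariant under the second and third flat moves---but the first-move difference need not vanish, since it lies in $C^D$ and in general $C^D\not\subset C^I$. This establishes precisely the two claims of the lemma.
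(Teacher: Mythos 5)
Your proposal follows essentially the same route as the paper's proof: the first flat move contributes a single triple of the form $(a,b,abb\R)$ or $(bba\eL,b,a)$, hence a generator of $C_1^D\subset C_1^{ID}$; the second flat move contributes a pair $(x,y,z)+(x,xyzT,z)$, a generator of $C_1^I$; and the third flat move changes the associated chain by exactly $\partial_2(a,b,c,d)$, a boundary that dies in every quotient. The final assembly (both quotients kill the second- and third-move differences, only $C^{ID}$ kills the first-move difference) is also the paper's.

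One ingredient of the paper's proof is missing from yours, and it is worth flagging because your argument as written does not cover the full statement. Your term-by-term identification of $c_{D'}-c_D$ with $\pm\partial_2(w)$ is carried out for one particular oriented version of the third flat Reidemeister move --- the one pictured in Fig.~\ref{fR3}, whose region labels line up with the substitutions $c\mapsto bcdT$ and $b\mapsto abcT$ built into $\partial_2$. There are eight oriented versions of the type~3 move, and for the other seven the triples read off via Fig.~\ref{chains} do not match the summands of $\partial_2(w)$ directly. The paper closes this gap by invoking the standard fact that the four oriented type~2 moves together with any single oriented type~3 move generate the remaining seven type~3 moves; since you have already established invariance under the second flat move (in both $H^{NI}$ and $H^{NID}$), this generation argument is available to you and completes the proof, but it needs to be said explicitly --- otherwise you have only proved invariance under one oriented instance of the third move.
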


\begin{proof}
In case of homology $H^{NID}(X)$, the first flat Reidemeister move adds or removes a degenerate cycle of the form ($a$, $b$, $abb\R$) or ($bba\eL$, $b$, $a$), see Fig. \ref{degR1}, so it does not change the homology class.

For $H^{NID}(X)$ and $H^{NI}(X)$, the second flat Reidemeister move adds or removes a degenerate cycle of the form $(x,y,z)+(x,xyzT,z)$, see Fig. \ref{fR2deg}, so the homology class is not changed.

Now consider the third flat Reidemeister move (Fig. \ref{fR3}).
For $H^{NID}(X)$ and $H^{NI}(X)$, the contributions from the crossings after the move, minus the contributions before the move, are equal to the boundary:
\begin{align*}
\partial_2(a,b,c,d) & =(b,c,d)-(a,abcT,(abcT)cdT)\\
& -(abcT,c,d)+(a,b,bcdT)\\
& +(ab(bcdT)T,bcdT,d)-(a,b,c).
\end{align*}
We also used the fact, that the four oriented Reidemeister moves of type 2, together with any one of the eight oriented Reidemeister moves of type 3, are sufficient to generate the other seven Reidemeister moves of type 3. 
\end{proof}

In a similar way we can prove the following lemma.

\begin{lemma}\label{vhominv}
The KTQ homology class of a cycle assigned to an oriented KTQ-colored link diagram on a compact oriented surface $F$ (or a plane) is an invariant under all Reidemeister moves if we use the homology $H^{N}(X)$. If we do not need the invariance under the first Reidemeister move, then $H(X)$ can be used (i.e., the basic homology corresponding to the differential $\partial$, without the use of any quotient complexes).
\end{lemma}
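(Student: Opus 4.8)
The plan is to follow the same template that proved Lemma \ref{flathominv}, adapting each step from the flat setting to the full (over/under) setting. The statement has three parts: invariance under R1, R2, and R3 when using $H^N(X)$; and invariance under R2 and R3 (but not necessarily R1) when using the basic homology $H(X)$. I would organize the proof around these three Reidemeister moves, treating both the positive and negative crossing versions, since a KTQ-colored diagram can have crossings of either sign and the associated chain assigns $+(a,b,c)$ to a positive crossing and $-(a,b,c)$ to a negative one.

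First I would handle R2. The key is to show that a type-2 move changes the associated chain $c_D$ by adding or removing a term that is a boundary (or, in the presence of the degenerate subcomplex, a degenerate term). The two crossings created by an R2 move have opposite signs, and by the analysis surrounding Fig. \ref{quasi} the colors are forced by the quasigroup operations; I expect their combined contribution to be exactly $\pm\partial_2$ of an appropriate $4$-tuple, analogous to the R3 computation, or else to vanish outright because the two crossings contribute canceling triples. Since this already holds in the basic complex before any quotient, it gives R2-invariance for both $H^N(X)$ and $H(X)$.

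Next I would treat R3. Here I would reuse verbatim the mechanism from Lemma \ref{flathominv}: the difference of the contributions after and before the move equals the explicit boundary $\partial_2(a,b,c,d)$ displayed there. Because $T$ satisfies A3L and A3R for a KTQ (not just an IKTQ), the same identity holds, so the homology class is unchanged. I would again invoke the generating-set fact that the four type-2 moves together with a single type-3 move generate all eight oriented type-3 moves, so it suffices to verify one version. This step needs no quotient complex, so it works for $H(X)$ and hence for $H^N(X)$ too.

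Finally, for R1 I would argue that a kink introduces a new crossing whose associated triple is exactly a degenerate generator of $C^D$ — of the form $(a,b,abb\R)$ or $(bba\eL,b,a)$, as in Fig. \ref{degR1} — because the combinatorial quasigroup structure forces the kink's corner colors into this shape. In $H^N(X)$ such a chain is identically zero in the quotient, so R1 does not change the class; without the quotient (i.e.\ in $H(X)$) this term survives, which is precisely why R1-invariance is lost there. The main obstacle I anticipate is the careful sign- and orientation-bookkeeping in the R2 step: one must track the co-orientation of each edge and the sign attached to each crossing to confirm that the two new crossings' triples combine into a genuine boundary rather than an unmatched residue, and one must check this uniformly across all four oriented versions of the R2 move.
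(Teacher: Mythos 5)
Your proposal is correct and follows essentially the same route as the paper: the paper's proof simply refers back to Lemma \ref{flathominv} and notes that the only real difference is the R2 step, where (resolving the hedge in your proposal) the two crossings of the bigon carry opposite signs and contribute the \emph{same} triple of colors, so their contributions cancel outright in the unquotiented complex --- no boundary term or degenerate subcomplex is needed there. This exact cancellation is precisely why $H(X)$ suffices for R2 and R3, while the quotient $H^{N}(X)$ is needed only to absorb the degenerate triple $(a,b,abb\R)$ or $(bba\eL,b,a)$ created by R1, just as you describe.
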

\begin{proof}
The main difference between the proof of this statement, and the proof in Lemma \ref{flathominv} concerns the second Reidemeister move. This time, we can use the signs of classical crossings. The contributions coming from the two crossings in the second Reidemeister move cancel out, because the crossings have opposite signs. 
\end{proof}

The applications of these homologies to Yoshikawa diagrams, and to broken surface diagrams, will be described in another paper.

In an analogous way to the construction in \cite{CJKLS03}, we define cocycle invariants.

\begin{definition}
Let $\phi$ be a cocycle from a KTQ (or IKTQ) cohomology of $X$, with inputs from $C_1(X)$, and taking values in some abelian group $A$ written multiplicatively.
For a coloring $\mathcal{C}$ of a diagram $D$ on a compact oriented surface $F$, or a plane, using $X$,
a {\it Boltzmann weight}, $B(\tau,\mathcal{C})$, assigned to a classical or flat crossing $\tau$, is the value of the cocycle on the signed triple of colors associated with a crossing as in Fig. \ref{chains}.
That is, we take $B(\tau,\mathcal{C})=\phi(a,b,c)$ or $B(\tau,\mathcal{C})=\phi(a,b,c)^{-1}$, depending on the type of crossing. The {\it cocycle knot invariant} is defined by the state-sum expression
\[ 
\Phi(D)=\sum_{\mathcal{C}}\prod_{\tau}B(\tau,\mathcal{C}),
\]
where the product is taken over all classical or flat crossings of $D$, and the sum is taken over all colorings of $D$ with $X$. The value of $\Phi(D)$ is in the group ring $\Z[A]$.
\end{definition}
Depending on which cohomology we use, $\Phi(D)$ will be invariant under the corresponding family of Reidemeister moves, or flat Reidemeister moves, as follows from the constructions of our homologies, and Lemmas \ref{flathominv} and \ref{vhominv}.

\bibliography{simplex}

\begin{thebibliography}{10}

\bibitem{Bel}
V.~D. Belousov.
\newblock {\em {$n$}-arnye kvazigruppy.}
\newblock Izdat. ``Stiinca'', Kishinev, 1972.

\bibitem{BelSan}
V.~D. Belousov and M.~D. Sandik.
\newblock {$N$}-ary quasi-groups and loops.
\newblock {\em Sibirsk. Mat. \v Z.}, 7:31--54, 1966.

\bibitem{CJKLS03}
J.~Scott Carter, Daniel Jelsovsky, Seiichi Kamada, Laurel Langford, and
  Masahico Saito.
\newblock Quandle cohomology and state-sum invariants of knotted curves and
  surfaces.
\newblock {\em Trans. Amer. Math. Soc.}, 355(10):3947--3989, 2003.
\newblock arXiv:math/9903135 [math.GT].

\bibitem{CKS01}
J.~Scott Carter, Seiichi Kamada, and Masahico Saito.
\newblock Geometric interpretations of quandle homology.
\newblock {\em J. Knot Theory Ramifications}, 10(3):345--386, 2001.

\bibitem{Naoko96}
Naoko Kamada.
\newblock The crossing number of alternating link diagrams on a surface.
\newblock In {\em K{NOTS} '96 ({T}okyo)}, pages 377--382. World Sci. Publ.,
  River Edge, NJ, 1997.

\bibitem{KaN02}
Naoko Kamada.
\newblock On the {J}ones polynomials of checkerboard colorable virtual links.
\newblock {\em Osaka J. Math.}, 39(2):325--333, 06 2002.

\bibitem{KK00}
Naoko Kamada and Seiichi Kamada.
\newblock Abstract link diagrams and virtual knots.
\newblock {\em Journal of Knot Theory and Its Ramifications}, 09(01):93--106,
  2000.

\bibitem{Kau99}
Louis~H. Kauffman.
\newblock Virtual knot theory.
\newblock {\em European Journal of Combinatorics}, 20(7):663 -- 691, 1999.

\bibitem{KJL15}
Jieon Kim, Yewon Joung, and Sang~Youl Lee.
\newblock On generating sets of {Y}oshikawa moves for marked graph diagrams of
  surface-links.
\newblock {\em Journal of Knot Theory and Its Ramifications}, 24(04):1550018,
  2015.

\bibitem{KN17}
Jieon Kim and Sam Nelson.
\newblock Biquasile colorings of oriented surface-links.
\newblock arXiv:1707.01148 [math.GT].

\bibitem{Ni17}
Maciej Niebrzydowski.
\newblock Homology of ternary algebras yielding invariants of knots and knotted
  surfaces.
\newblock arXiv:1706.04307 [math.GT].

\bibitem{Ni14}
Maciej Niebrzydowski.
\newblock On some ternary operations in knot theory.
\newblock {\em Fund. Math.}, 225(1):259--276, 2014.
\newblock arXiv:1301.0391 [math.GT].

\bibitem{Sm08}
Jonathan D.~H. Smith.
\newblock Ternary quasigroups and the modular group.
\newblock {\em Comment. Math. Univ. Carolin.}, 49(2):309--317, 2008.

\bibitem{Yo94}
Katsuyuki Yoshikawa.
\newblock An enumeration of surfaces in four-space.
\newblock {\em Osaka J. Math.}, 31(3):497--522, 1994.

\end{thebibliography}
\bibliographystyle{plain}
\end{document}